\definecolor{db}{rgb}{0.0470,0,0.5294}
\definecolor{dg}{rgb}{0.0,0.392,0.0}
\definecolor{firebrick}{rgb}{0.698,0.133,0.133}
\definecolor{bl}{rgb}{0.0,0.0,0.0}
\definecolor{linen}{rgb}{0.980,0.941,0.902}
\definecolor{ivory}{rgb}{1.0,1.0,0.941}
\definecolor{aliceblue}{rgb}{0.941,0.973,1.0}
\definecolor{beige}{rgb}{0.961,0.961,0.863}
\definecolor{tan}{rgb}{0.824,0.706,0.549}
\definecolor{lightsteelblue}{rgb}{0.690,0.769,0.871}
\definecolor{paleturquoise}{rgb}{0.686,0.933,0.933}
\definecolor{lightblue}{rgb}{0.678,0.847,0.902}
\definecolor{skyblue}{rgb}{0.529,0.808,0.922}
\definecolor{palegoldenrod}{rgb}{0.933,0.910,0.667}
\definecolor{lightgoldenrod}{rgb}{0.933,0.867,0.510}
\definecolor{lightyellow}{rgb}{1.0,1.0,0.878}
\definecolor{yellow}{rgb}{1.0,1.0,0.0}
\definecolor{lightyellow1}{rgb}{1.0,1.0,0.878}
\definecolor{lemonchiffon}{rgb}{1.0,0.980,0.804}
\definecolor{myyellow}{rgb}{1,1,.9}
\definecolor{darkgreen}{rgb}{0.0,0.392,0.0}
\definecolor{darkviolet}{rgb}{0.580,0.0,0.827}
\definecolor{lightsalmon}{rgb}{1.0,0.627,0.478}
\definecolor{orange}{rgb}{1.0,0.647,0.0}
\numberwithin{equation}{section}
\begin{document}

\title
{Analysis of the variable step method of Dahlquist, Liniger and
Nevanlinna for fluid flow}
%
%
%
%
\author{William Layton\thanks{%
Department of Mathematics, University of Pittsburgh, Pittsburgh, PA 15260,
USA. Email: \href{mailto:wjl@pitt.edu}{wjl@pitt.edu}. The research herein was partially supported by NSF grant DMS 1817542.}
\and Wenlong Pei\thanks{%
Department of Mathematics, University of Pittsburgh, Pittsburgh, PA 15260,
USA. Email: \href{mailto:wep17@pitt.edu}{wep17@pitt.edu}.}
\and Yi Qin\thanks{%
School of Mathematics and Statistics, Xi'an Jiaotong University, Xi'an,
Shaanxi 710049, China. Email: \href{mailto:tqinyi1991@stu.xjtu.edu.cn}{qinyi1991@stu.xjtu.edu.cn}.}
\and Catalin Trenchea\thanks{%
Department of Mathematics, University of Pittsburgh, Pittsburgh, PA 15260,
USA. Email: \href{mailto:trenchea@pitt.edu}{trenchea@pitt.edu}. Partially supported by the NSF grant DMS-1522574.
}}


\date{\emty}
\maketitle



\begin{abstract}
The two-step time discretization proposed by Dahlquist, Liniger and
Nevanlinna is variable step $G$-stable. (In contrast, for increasing time
steps, the BDF2 method loses A-stability and suffers non-physical energy
growth in the approximate solution.) While unexplored, it is thus ideal for
time accurate approximation of the Navier-Stokes equations. This report
presents an analysis, for variable time-steps, of the method's stability and
convergence rates when applied to the NSE. It is proven that the method is
variable step,
unconditionally, long time stable and second order accurate. Variable step
error estimates are also proven. The results are supported by several
numerical tests.
\end{abstract}
\maketitle

\section{Introduction}

The accurate numerical simulation of flows of an incompressible, viscous
fluid, with the accompanying complexities occurring in practical settings,
is a problem where speed, memory and accuracy never seem sufficient. For
time discretization (considered herein), many simulations use the constant
step, first order, fully implicit method, e.g., Chen and Mclaughlin \cite
{MR3962839}, Jiang \cite{MR3260478},
Jiang and Tran \cite{MR3365766},
and (with few exceptions noted
in Section \ref{subsection:relatedwork}) the remainder use the constant
timestep trapezoid / implicit midpoint
scheme, e.g. Baker \cite{B76}, Baker, Dougalis and Karakashian
\cite{MR669634}, Ingram \cite{MR3055600}, Labovsky, Manica and Neda
\cite{MR2498863},  Simo, Armero and Taylor \cite{MR1331503}, (often
combined with fractional steps, Bristeau, Glowinski and P\'{e}riaux
\cite{BGP87} or with ad hoc fixes to correct
for oscillations due to lack of $L$-stability, ~\O sterby \cite{MR2068272}) or
the BDF2 method (e.g., Akbas, Kaya and Rebholz \cite{MR3652175}, Ascher and
Petzold \cite{MR1638643}, Grigorieff \cite{MR723632}, Mays and Neda
\cite{MR3264334}, Rong and Fiordilino \cite{RF18}).
Time \textit{accuracy} requires time step \textit{adaptivity} within the
computational, space and cognitive complexity limitations of CFD. Beyond
accuracy, adaptivity has the secondary benefit (depending on implementation)
of reducing memory requirements and decreasing the number of floating point
operations.

The richness of scales of higher Reynolds number flows and the cost per step
of their solution suggests a preference for $A$-stable (or even $L$-stable)
multi-step methods, called \textit{Smart Integrators} in  Gresho, Sani and
Engelman \cite[Section
3.16.4]{gresho1998incompressiblevol2}. For \textit{constant} time steps,
a complete analysis of the general (2 parameter family of) 2-step, $A$-stable
linear multi-step method is performed in the 1979 book Girault and Raviart
\cite{MR548867} but
there is no analogous stability or convergence analysis for the important case
of \textit{variable} timesteps. As an example of the challenges involved in
variable steps, BDF2 (a popular member of that $A$-stable family) loses
$A$-stability for increasing time steps, allowing non-physical energy growth. 
The instability is weak since $0$-stability is preserved for smoothly varying
timesteps, Boutelje and Hill
 \cite{Boutelje:2010:NonautonomousStabilityOfLinearMultistepMethod},
{S{\"o}derlind}, {Fekete} and {Farag{\'o}}
 \cite{2018arXiv180404553S}.
Similarly, the (2-leg) trapezoidal method can exhibit energy growth, when used
with variable steps
(Dahlquist, Liniger and Nevanlinna \cite{MR714701}, page 1073).
Liniger \cite{L83}\ presents a 2-step method that is non-autonomous $A$-stable
(applied to $y' = \lambda(t)y$). Dahlquist, Liniger and
Nevanlinna \cite{MR714701} give one that is $G$-stable (nonlinearly, energetically
stable, e.g., Dahlquist \cite{MR519054,dahlquist_tritaNA76218,MR520750}, Hairer,
N{\o }rsett and Wanner  \cite{MR1227985}) for
\textit{any} sequence of increasing or decreasing time-steps. Herein we give
an analysis of this method of Dahlquist, Liniger and Nevanlinna \cite{MR714701}
(the DLN method henceforth) for the Navier-Stokes Equations (NSE) with variable
timesteps.

Let $\Omega $ be the flow domain in ${\mathbb{R}}^{d}$ ($d=2\ \text{or\ }3$%
). The fluid velocity is denoted $u\left( x,t\right) $, pressure $p\left(
x,t\right) $ and body force $f\left( x,t\right) $. We analyze the variable
step, DLN time discretization for the NSE
\begin{align*}
& u_{t}+u\cdot \nabla {u}-\nu \Delta {u}+\nabla p=f,\ \ x\in \Omega ,\ \
0<t\leq T, \\
& \nabla \cdot u=0,\ \ x\in \Omega \ \ \text{for}\ \ 0<t\leq T,\ \
u(x,0)=u_{0}(x),\ \ x\in \Omega , \\
& u=0\ \ \text{on}\ \ \partial {\Omega },\ \ \int_{\Omega }p\,dx=0\ \ \text{%
for}\ \ 0<t\leq T.
\end{align*}%
Section \ref{section:variableDLN} recalls the DLN method. Applied to the
NSE, it takes the form
\begin{gather*}
\left( \frac{{\alpha _{2}}{u_{n+1}^{h}}+{\alpha _{1}}{u_{n}^{h}}+{\alpha _{0}%
}{u_{n-1}^{h}}}{\widehat{k}_{n}},v^{h}\right) +\nu (\nabla {u_{n,\ast }^{h}}%
,\nabla {v^{h}})
+b^{\ast }(u_{n,\ast }^{h},u_{n,\ast }^{h},{v^{h}})\\
-(p_{n,\ast }^{h},\nabla
\cdot {v^{h}})=(f(t_{n,\ast }),v^{h}), \\
(\nabla \cdot {u_{n+1}^{h}},q^{h})=0\text{, \ where }u_{n,\ast }=\sum_{\ell
=0}^{2}{\beta _{\ell }^{(n)}}u_{n-1+\ell }.
\end{gather*}%
Here  $\widehat{k}_{n}$ is a similar average of the variable time steps $%
k_{n-1}$ and $k_{n}$, and the multi-step method's coefficients ${\alpha _{2}}%
,{\alpha _{1}},{\alpha _{0},\beta }{_{2},\beta _{1},\beta _{0}}$ are given
in Section \ref{section:variableDLN}. The DLN method is a one-parameter
family (with parameter denoted $\theta $) and A-stable. Thus the constant
time step case (not considered herein) is a subset of the analysis in  Girault
and Raviart \cite%
{MR548867}. Section \ref{section:variableDLN} also presents its critical
property of variable step stability of $G$-stability with the \textit{%
G-matrix independent of the time step ratio}. Notations and preliminaries
are presented in Section \ref{section:notations}. Section \ref%
{section:stabilityDLN} gives a proof of variable timestep, unconditional,
long time, nonlinear stability of the one-leg DLN method for NSE. Let
$\Vert \cdot \Vert$ denote the $L^{2}$-norm. This analysis
shows that the natural kinetic energy, $\mathcal{E}(t_{n})$, and numerical
dissipation rate, $\mathcal{D}(t_{n})$, of the DLN approximation are%
\begin{eqnarray*}
\mathcal{E}(t_{n}) &=&{\frac{1}{4}}(1+\theta )\Vert u_{n}^{h}\Vert ^{2}+{%
\frac{1}{4}}(1-\theta )\Vert u_{n-1}^{h}\Vert ^{2},\quad \mbox{$\theta$=
method parameter}, \\
\mathcal{D}(t_{n}) &=&\frac{1}{\widehat{k}_{n}}\Big\|\sum_{\ell =0}^{2}{%
a_{\ell }^{(n)}}{u_{n-1+\ell }^{h}}\Big\|^{2},\text{ where the coefficients }%
a_{\ell }^{(n)}\mbox{ are given in \eqref{eq:aicoeff}}.
\end{eqnarray*}

Section \ref{section:erroranalysis} provides the variable step error
analysis. The DLN method is proven second-order for any sequence of time
steps. Numerical tests are presented in Section \ref{section:numericaltests}.
The first example confirms the theoretical prediction of second order
accuracy. The second test shows that DLN has stability advantages over BDF2
for variable timesteps. There is a recent idea of Capuano, Sanderse, Angelis
and Coppola \cite{Capuano2017AMT} to adapt the
time step to control the ratio of numerical to physical dissipation. Rather
than test a standard approach to error estimation and adaptivity, we also
test this idea in Section \ref{section:numericaltests}.

\subsection{Related work}

\label{subsection:relatedwork}

The number of papers studying timestepping methods
for flow problems is very large. The general (2 parameter) 2-step $A$-stable
method was analyzed for the NSE for \textit{constant} time steps in
Girault and Raviart \cite{MR548867}, and developed further by Jiang, Mohebujjaman
and Rebholz \cite{JIANG2016388}. Time adaptive discretizations of the NSE have
been limited by the Dahlquist barrier, storage limitations and the cognitive
complexity of extending to the NSE many of the standard methods for systems
of ordinary differential equations. One early and important work is that of
Kay, Gresho, Griffiths and Silvester \cite{MR2599769}. It presents an adaptive
algorithm based on the trapezoid
scheme / linearized midpoint rule (with error estimation done using an
explicit AB2 type method) that is memory and computation efficient. It is
well known for systems of ODEs that variable step, variable order (VSVO)
methods are the ones of choice. These have only been considered for the NSE
in three recent works, Hay, Etienne, Pelletier and Garon \cite{MR3327978}
(based on the BDF family), Decaria, Guzel and Li \cite{DLL18}, Decaria and
Zhao \cite{DLZ18} (based on time filters). The methods based on time filters are
promising but relatively unexplored. For example, their variable step
$G$-stability is unknown.

%
%

\section{The variable step DLN\ method}

\label{section:variableDLN}

The DLN method is a 1-parameter ($0\leq \theta \leq 1$) family of $A$-stable,
2-step, $G$-stable \ methods. If $\theta =1$ it reduces to the one-step,
one-leg trapezoid
(midpoint) scheme. Its key property is that\textit{\ the G-stability matrix
depends on the parameter }$\theta $\textit{\ but not on the timestep ratio}
in Lemma \ref{theorem:GstabilityDLN} below.
Let $y:[0,T]\rightarrow \mathbb{R}^{d},f:\mathbb{R}\times {\mathbb{R}}%
^{d}\rightarrow {\mathbb{R}}^{d}$. Consider the initial value problem
\begin{equation*}
y^{\prime }(t)=f(t,y(t)),\quad y\left( 0\right) =y_{0}.
\end{equation*}%
Let partition $P$ on $[0,T]$ be $\{t_{n}\}_{n=0}^{M}$ ($M\in \mathbb{N}$)
where
\begin{equation*}
0=t_{0}<t_{1}<\cdots <t_{M-1}<t_{M}=T.
\end{equation*}%
We recall the following notation from Dahlquist, Liniger and Nevanlinna
\cite{MR714701} for the local step
size $k_{n}$, the stepsize variability $\varepsilon _{n}\in (-1, 1)$:
\begin{equation*}
k_{n}=t_{n+1}-t_{n},\qquad \varepsilon _{n}=\frac{k_{n}-k_{n-1}}{%
k_{n}+k_{n-1}},
\end{equation*}%
and the coefficients $\{\alpha _{\ell },\beta _{\ell }\}_{\ell =0:2}$ are
\begin{align}  \label{eq:DLNcoeffs-explicit}
\left(
\begin{array}{ll}
\alpha _{2} & \beta _{2}^{(n)}\vspace{0.2cm} \\
\alpha _{1} & \beta _{1}^{(n)}\vspace{0.2cm} \\
\alpha _{0} & \beta _{0}^{(n)}%
\end{array}%
\right) =\left(
\begin{array}{lll}
\frac{1}{2}(\theta +1) &  & \frac{1}{4}\Big(1+\frac{1-{\theta }^{2}}{(1+{%
\varepsilon _{n}}{\theta })^{2}}+{\varepsilon _{n}}^{2}\frac{\theta (1-{%
\theta }^{2})}{(1+{\varepsilon _{n}}{\theta })^{2}}+\theta \Big)\vspace{0.2cm%
} \\
-\theta  &  & \frac{1}{2}\Big(1-\frac{1-{\theta }^{2}}{(1+{\varepsilon _{n}}{%
\theta })^{2}}\Big)\vspace{0.2cm} \\
\frac{1}{2}(\theta -1) &  & \frac{1}{4}\Big(1+\frac{1-{\theta }^{2}}{(1+{%
\varepsilon _{n}}{\theta })^{2}}-{\varepsilon _{n}}^{2}\frac{\theta (1-{%
\theta }^{2})}{(1+{\varepsilon _{n}}{\theta })^{2}}-\theta \Big)%
\end{array}%
\right) .
\end{align}%
%
For constant time steps, the DLN stability region boundary with
$\theta = \frac{1}{2}$ and that of BDF2
for comparison plotted by the root locus are given in Figure \ref{fig0}.
\begin{figure}[ptbh]
\centering
\par
{
		\begin{minipage}[t]{0.45\linewidth}
			\centering
			\includegraphics[width=2.3in,height=1.8in]{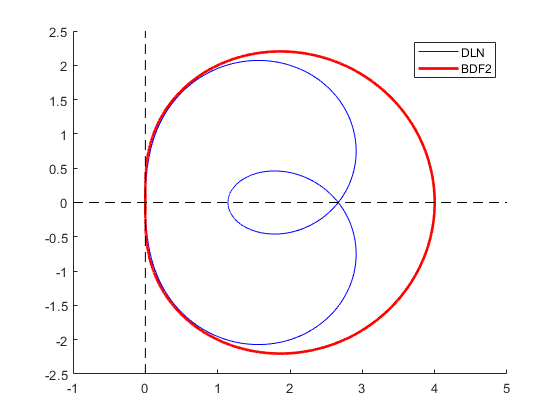}\\
			\vspace{0.02cm}
					\end{minipage}	} \centering
\vspace{-0.2cm}
\caption{Boundaries of Stability Region for constant DLN and BDF2.}
\label{fig0}
\end{figure}
We also recall the definitions of the DLN's averaged timestep $\widehat{k}_{n}$:
\begin{equation*}
\widehat{k}_{n}={\alpha _{2}}k_{n}-{\alpha _{0}}k_{n-1}=\frac{1}{2}(1+\theta
)k_{n}+\frac{1}{2}(1-\theta )k_{n-1}=\theta \frac{k_{n}-k_{n-1}}{2}+\frac{%
k_{n}+k_{n-1}}{2},
\end{equation*}%
%
and the coefficients $a_{\ell }^{(n)}$:
\begin{align}   \label{eq:aicoeff}
a_{1}^{(n)}=-\frac{\sqrt{\theta \left( 1-{\theta }^{2}\right) }}{\sqrt{2}%
(1+\varepsilon _{n}\theta )},\quad a_{2}^{(n)}=-\frac{1-\varepsilon _{n}}{2}%
a_{1}^{(n)},\quad a_{0}^{(n)}=-\frac{1+\varepsilon _{n}}{2}a_{1}^{(n)},
\end{align}%
which are used in the expression of the numerical dissipation.

The $\alpha_\ell$-coefficients do not depend on the time-step ratio.
The $\beta_\ell$- and $a_\ell$-coefficients depend on the time-step
ratios through the variability coefficients $\varepsilon_n$. %
\newline

The one-leg DLN method is then
\begin{align}  \label{eq:1legDLN}
\sum_{\ell =0}^{2}{\alpha _{\ell }}y_{n-1+\ell }=\widehat{k}_{n}{f}\Big(%
\sum_{\ell =0}^{2}{\beta _{\ell }^{(n)}}t_{n-1+\ell },\sum_{\ell =0}^{2}{%
\beta _{\ell }^{(n)}}y_{n-1+\ell }\Big). \tag{DLN}
\end{align}%
Let $\left\Vert \cdot \right\Vert $ and $\left( \cdot ,\cdot \right) _{{%
\mathbb{R}}^{d}}$ denote in this section the usual norm and inner
product on Euclidean space ${\mathbb{R}}^{d}.$

\begin{definition}
For $\theta \in [0,1]$, define the symmetric semi-positive definite $G(\theta)$
matrix
\begin{align}  \label{eq:Gmatrix}
G(\theta) = \left(%
\begin{array}{ll}
\frac{1}{4} (1+\theta) \mathbb{I}_d & 0 \vspace{.2cm} \\
0 & \frac{1}{4} (1-\theta) \mathbb{I}_d%
\end{array}%
\right),
\end{align}
with the corresponding G-norm 
\begin{align}  \label{eq:G-norm}
\begin{Vmatrix}
u \\
v%
\end{Vmatrix}
_{G(\theta)}^{2}:={\frac{1}{4}}(1+{\theta})\left\Vert u\right\Vert ^{2}+{%
\frac{1}{4}}(1-{\theta})\left\Vert v\right\Vert ^{2}\ \ \ \text{for \ }u,v\in%
{\mathbb{R}}^{d}.
\end{align}
\end{definition}

%
Recall the following result, from  Dahlquist, Liniger and Nevanlinna
\cite{MR714701}, related to the
$G$-stability of the DLN method \eqref{eq:1legDLN}, which will be used in
proving main theorems herein.

\begin{lemma}
\label{theorem:GstabilityDLN} Let $0\leq \theta \leq 1$. The variable step,
one-leg DLN method (\ref{eq:1legDLN}) is G-stable, i.e. for any $%
n=1,2,\cdots M-1$, with $a_{\ell }^{(n)}\ (\ell =0,1,2)$ given above %
\eqref{eq:aicoeff}, we have
\begin{align}   \label{eq:Gstab}
\Big(\sum_{\ell =0}^{2}{\alpha _{\ell }}y_{n-1+\ell },\sum_{\ell =0}^{2}{%
\beta _{\ell }^{(n)}}y_{n-1+\ell }\Big)_{{\mathbb{R}}^{d}}=%
\begin{Vmatrix}
{y_{n+1}} \\
{y_{n}}%
\end{Vmatrix}%
_{G(\theta )}^{2}-%
\begin{Vmatrix}
{y_{n}} \\
{y_{n-1}}%
\end{Vmatrix}%
_{G(\theta )}^{2}+\Big\|\sum_{\ell =0}^{2}{a_{\ell }^{(n)}}y_{n-1+\ell }%
\Big\|^{2}.
\end{align}
\end{lemma}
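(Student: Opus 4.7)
The stated equality is a purely algebraic identity between two symmetric quadratic forms in the three vectors $y_{n-1},y_n,y_{n+1}\in\mathbb{R}^d$, with coefficients that are rational functions of $\theta$ and $\varepsilon_n$. My plan is to prove it by direct expansion and coefficient matching in the six-dimensional basis of inner products $\{(y_{n-1+i},y_{n-1+j})_{\mathbb{R}^d}:0\le i\le j\le 2\}$.

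First I would expand the left-hand side as
\[
\Big(\sum_{i=0}^{2}\alpha_i y_{n-1+i},\sum_{j=0}^{2}\beta_j^{(n)} y_{n-1+j}\Big)_{\mathbb{R}^d}=\sum_{i,j=0}^{2}\alpha_i\beta_j^{(n)}(y_{n-1+i},y_{n-1+j})_{\mathbb{R}^d},
\]
obtaining diagonal coefficients $\alpha_i\beta_i^{(n)}$ and off-diagonal coefficients $\alpha_i\beta_j^{(n)}+\alpha_j\beta_i^{(n)}$ for $i<j$. Next I would expand the right-hand side in the same basis. By \eqref{eq:G-norm} the telescoping $G$-norm difference is purely diagonal and equals $\tfrac14(1+\theta)\|y_{n+1}\|^2-\tfrac{\theta}{2}\|y_n\|^2-\tfrac14(1-\theta)\|y_{n-1}\|^2$, while the dissipation square $\|\sum_\ell a_\ell^{(n)} y_{n-1+\ell}\|^2$ contributes $(a_\ell^{(n)})^2$ on the diagonal and $2a_i^{(n)}a_j^{(n)}$ off-diagonal.

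It then remains to verify six scalar identities in $\theta$ and $\varepsilon_n$: three diagonal, such as $\alpha_2\beta_2^{(n)}=\tfrac14(1+\theta)+(a_2^{(n)})^2$, and three mixed, such as $\alpha_2\beta_1^{(n)}+\alpha_1\beta_2^{(n)}=2a_2^{(n)}a_1^{(n)}$. The structural observation that keeps this tractable is that, with the abbreviation $s:=\sqrt{\theta(1-\theta^2)}/\bigl(\sqrt{2}(1+\varepsilon_n\theta)\bigr)$, one has $a_1^{(n)}=-s$ and $a_0^{(n)},a_2^{(n)}=\tfrac{1\pm\varepsilon_n}{2}\,s$, so the Gram matrix $(a_i^{(n)}a_j^{(n)})$ is of rank one and all six of its entries are elementary $s^2$-multiples; in particular, substituting $s^2=\theta(1-\theta^2)/\bigl(2(1+\varepsilon_n\theta)^2\bigr)$ puts the whole right-hand side over the common denominator $(1+\varepsilon_n\theta)^2$ that already appears in the $\beta_\ell^{(n)}$.

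The main, and really only, obstacle is the algebraic bookkeeping of the $\beta_\ell^{(n)}$ expressions in \eqref{eq:DLNcoeffs-explicit}, each of which has four summands sharing the denominator $(1+\varepsilon_n\theta)^2$. I would clear this denominator uniformly by multiplying each of the six identities by $4(1+\varepsilon_n\theta)^2$ and then verify the resulting polynomial identities in the two variables $\theta$ and $\varepsilon_n$ by matching coefficients of monomials. No deeper machinery is needed; this is essentially the computation sketched in the original paper of Dahlquist, Liniger and Nevanlinna \cite{MR714701}, and the fact that the $\alpha_\ell$ are independent of $\varepsilon_n$ while the $\varepsilon_n$-dependence of the $\beta_\ell^{(n)}$ and $a_\ell^{(n)}$ is carried by a single factor of $(1+\varepsilon_n\theta)^{-2}$ is precisely what makes the $G$-matrix \eqref{eq:Gmatrix} independent of the step-size ratio.
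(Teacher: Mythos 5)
Your proposal is correct: the six coefficient identities you name (e.g.\ $\alpha_2\beta_2^{(n)}=\tfrac14(1+\theta)+(a_2^{(n)})^2$ and $\alpha_2\beta_1^{(n)}+\alpha_1\beta_2^{(n)}=2a_2^{(n)}a_1^{(n)}$) do check out after clearing the common factor $(1+\varepsilon_n\theta)^{2}$, and this Gram-basis coefficient matching is exactly the ``algebraic calculation'' the paper's one-line proof attributes to Dahlquist, Liniger and Nevanlinna. So you are taking essentially the same route as the paper, merely writing out the verification it leaves implicit.
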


%

\begin{proof}
The proof (implicit in Dahlquist, Liniger and Nevanlinna \cite{MR714701}) is
an algebraic calculation. \ \ \
\end{proof}

\section{Notation and Preliminaries}

\label{section:notations}

Let $\Omega$ be any domain in ${\mathbb{R}}^{d}$ ($d=2\ \text{or\ }3$). For $%
1\leq p<\infty$, $\Vert\cdot\Vert_{L^{p}}$ and $\Vert\cdot\Vert_{W_{p}^{k}}$
are norms on function spaces $L^{p}\left( \Omega\right) $ and $%
W_{p}^{k}\left( \Omega\right) $ respectively. There is a special case: if $%
p=2$, we denote $\Vert\cdot\Vert$ be $L^{2}$-norm
with inner product $( \cdot,\cdot )$. $H^{k}\left(
\Omega\right) $ is the Sobolev space $W_{2}^{k}\left( \Omega\right) $ with
norm $\Vert\cdot\Vert_{k}$. \noindent The velocity and pressure $\left(
u,p\right) $ are in the spaces $\left( X,Q\right) $
given by
\begin{align*}
& X = \Big\{ v:\Omega\rightarrow{\mathbb{R}}^{d}:v\in L^{2}\left(
\Omega\right) ,\nabla{v}\in L^{2}\left( \Omega\right) \ \text{and \ }v=0\
\text{on \ }\partial{\Omega}\Big\} , \\
& Q = \Big\{ q:\Omega\rightarrow{\mathbb{R}}:v\in L^{2}\left( \Omega\right)
\ \text{and \ }\int_{\Omega}q \, dx =0 \Big\} .
\end{align*}
The spaces of divergence free functions is denoted
\begin{align*}
V =\Big\{ v\in X:\left( \nabla\cdot v,q\right) =0,\ \forall q\in Q \Big\}.
\end{align*}
The space $X^{\ast}$ and $V^{\ast}$ are the dual space of $X$ and $V$ with
norms given by
\begin{align*}
\left\Vert f\right\Vert _{-1}:=\sup_{0\neq v\in X}\frac{\left( f,v\right) }{%
\left\Vert \nabla{v}\right\Vert }\ ,\ \ \ \left\Vert f\right\Vert _{\ast
}:=\sup_{0\neq v\in V}\frac{\left( f,v\right) }{\left\Vert \nabla {v}%
\right\Vert },
\end{align*}
respectively. For functions $v\left( x,t\right) $ and $1\leq p<\infty$,
define
\begin{align*}
\Vert v\Vert_{\infty,k} := \limfunc{ess\,sup}_{0<t<T}\Vert v\left( t,\cdot
\right) \Vert_{k}\ \ \ \text{and \ }\Vert v\Vert_{p,k}:=\left(
\int_{0}^{T}\Vert v\left( t,\cdot\right) \Vert_{k}^{p}dt\right) ^{1/p}\ .
\end{align*}
\noindent For $u,v,w\in X$, define the explicitly skew symmetrized trilinear
form
\begin{align*}
b^{\ast}\left( u,v,w\right) :=\frac{1}{2}\left( u\cdot\nabla{v},w\right) -%
\frac{1}{2}\left( u\cdot\nabla{w},v\right) .
\end{align*}
\noindent$b^{\ast}\left( u,v,w\right) $ satisfies the bound,
\cite[p.11 Lemma\ 3]{LAYTON2008},
\begin{align} \label{eq:conti-triform}
b^{\ast}\left( u,v,w\right) \leq C\left( \Omega\right) \Vert\nabla{u}%
\Vert\Vert\nabla{v}\Vert\Vert\nabla{w}\Vert,  \notag \\
b^{\ast}\left( u,v,w\right) \leq C\left( \Omega\right) \Vert u\Vert
^{1/2}\Vert\nabla{u}\Vert^{1/2}\Vert\nabla{v}\Vert\Vert\nabla{w}\Vert.
\end{align}
We recall the following standard lemma for $b^{\ast}$

\begin{lemma}
\label{lemma:skew-symmetry} For any $u,v,w \in X$
\begin{align}  \label{eq:skew-symmetry}
b^{*} \left( u, v, v \right) =0 ,
\end{align}
and
\begin{align}
b^{*} \left( u, v, w \right) = \left( u \cdot\nabla{v}, w \right) ,
\end{align}
for all $u \in V$ and $v,w \in X$.
\end{lemma}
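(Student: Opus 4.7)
The lemma records two standard and essentially definitional properties of $b^{\ast}$, so the proof plan is short.

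For the first identity, the plan is to simply set $w=v$ directly in the definition
\[
b^{\ast}(u,v,w) = \tfrac12 (u\cdot\nabla v, w) - \tfrac12(u\cdot\nabla w, v).
\]
The two terms on the right are then identical and cancel, giving $b^{\ast}(u,v,v)=0$. No integration by parts or regularity argument is needed; this is why $b^{\ast}$ is introduced in the first place.

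For the second identity, the plan is to show that, under the extra hypothesis $u\in V$, the antisymmetric part of $b^{\ast}$ collapses so that both halves of the definition coincide. Concretely, I will prove
\[
(u\cdot\nabla w, v) = -(u\cdot\nabla v, w) \quad\text{for all } u\in V,\; v,w\in X,
\]
which substituted into the definition immediately yields $b^{\ast}(u,v,w) = (u\cdot\nabla v, w)$. To prove this auxiliary identity I would integrate by parts componentwise: writing $(u\cdot\nabla v,w) = \int_\Omega u_j(\partial_j v_i) w_i\, dx$ and moving the derivative onto $u_j w_i$ produces a boundary term, a $(\nabla\cdot u)\,(v\cdot w)$ term, and $-(u\cdot\nabla w, v)$. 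The boundary term vanishes because $X$ enforces $u=0$ on $\partial\Omega$, and the divergence term vanishes because $u\in V$ gives $(\nabla\cdot u, q)=0$ for all $q\in Q$; since $\nabla\cdot u$ has zero mean (again by $u|_{\partial\Omega}=0$), this forces $\nabla\cdot u=0$ in $L^{2}(\Omega)$, so $(\nabla\cdot u, v\cdot w)=0$.

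There is no genuine obstacle in this proof: the only issue worth being careful about is justifying the integration by parts at the regularity of $X$, which is handled by density of $C_c^\infty$-type test functions in $X$ together with the continuity bound \eqref{eq:conti-triform} on $b^{\ast}$. This allows one to prove the identity first for smooth $u,v,w$ and then pass to the limit in $X$.
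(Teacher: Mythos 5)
Your proposal is correct and follows essentially the same route as the paper: the first identity by direct cancellation in the definition of $b^{\ast}$, and the second by integration by parts using $u|_{\partial\Omega}=0$ and $\nabla\cdot u=0$. The extra care you take in deducing $\nabla\cdot u=0$ from the weak condition defining $V$ and in invoking density for the integration by parts only makes explicit what the paper leaves implicit.
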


\begin{proof}
By definition of $b^{*}$, we have $b^{*} \left( u, v, v \right) =0, \
\forall u,v \in X$. For second part, integrate by parts then use
$\nabla \cdot u =0$ and $u|_{\partial{\Omega}} = 0$.\ \ \
\end{proof} 

\noindent We base our analysis on the finite element method (FEM) for the
spatial discretization. The approximate solutions for the velocity and pressure are
in the finite element spaces, based on an edge to edge triangulation $\Omega $
(with maximum triangle diameter $h$ ) denoted by
\begin{equation*}
X_{h}\subset X,\ \ \ Q_{h}\subset Q.
\end{equation*}%
We assume that $X_{h}$ and $Q_{h}$ satisfy the usual discrete inf-sup
condition ($LBB^{h}$ condition). The Taylor-Hood elements, which satisfy the
condition, are used in the numerical tests. The discretely divergence-free
subspace of $X_{h}$ is
\begin{equation*}
V_{h}:=\left\{ v_{h}\in X_{h}:\left( \nabla \cdot v_{h},q_{h}\right) =0,\
\forall q_{h}\in Q_{h}\right\} .
\end{equation*}%
We also need the following interpolation error estimate for the velocity $u$ and pressure
$p$: for $k,s \in \mathbb{N}$,
\begin{align} \label{eq:interpolation-Error}
\Vert u-I^{h}u\Vert_{r}\leq Ch^{k+1-r}\Vert u\Vert_{k+1},\ \ \ u\in {%
H^{k+1}\left( \Omega\right) }^{d}, \ \ \ 0 \leq r \leq k  \notag \\
\Vert p-I^{h}p\Vert_{r}\leq Ch^{s+1-r}\Vert p\Vert_{s+1},\ \ \ p\in
H^{s+1}\left( \Omega\right), \ \ \ 0 \leq r \leq s
\end{align}
where $I^{h}u$ and $I^{h}p$ are the $L^{2}$ projection of $u$ and $p$ onto $X^{h}$ and $%
Q^{h}$ respectively, see e.g. Brenner and Scott \cite{MR1278258}.

Let $[0,T]$ be a time interval, $P_{0}=\{t_{n}\}_{n=0}^{M}$ a partition on $%
[0,T]$, and $\{k_{n}\}_{n=0}^{M-1}$ denote the set of time-step sizes.

\begin{definition}
For any given sequence $\{z_{n}\}_{n\geq 1}$, we denote by
\begin{equation*}
z_{n,\ast }=\sum_{\ell =0}^{2}{\beta _{\ell }^{(n)}}z_{n-1+\ell }
\end{equation*}
the convex combination of the three adjacent terms in the sequence.
\end{definition}

As examples, $\{t_{n,\ast }\}$ is the set of time-values 
and $u_{n,\ast }$ are the implicit values where the equation is evaluated%
\begin{eqnarray*}
t_{n,\ast } &=&\beta _{2}^{(n)}t_{n+1}+\beta _{1}^{(n)}t_{n}+\beta
_{0}^{(n)}t_{n-1}, \\
u_{n,\ast } &=&\beta _{2}^{(n)}u_{n+1}+\beta _{1}^{(n)}u_{n}+\beta
_{0}^{(n)}u_{n-1}.
\end{eqnarray*}%
The variational formulation of the one-leg DLN method for the NSE is as
follows. With the DLN coefficients 
\eqref{eq:DLNcoeffs-explicit}, given $u_{n}^{h},u_{n-1}^{h}\in X_{h}$ and $%
p_{n}^{h},p_{n-1}^{h}\in Q_{h}$, find $u_{n+1}^{h}$ and $p_{n+1}^{h}$
satisfying
\begin{gather}
\Big(\frac{{\alpha _{2}}{u_{n+1}^{h}}+{\alpha _{1}}{u_{n}^{h}}+{\alpha _{0}%
}{u_{n-1}^{h}}}{\widehat{k}_{n}},v^{h}\Big)+\nu (\nabla {u_{n,\ast }^{h}}%
,\nabla {v^{h}})+b^{\ast }(u_{n,\ast }^{h},u_{n,\ast }^{h},{v^{h}}%
) \notag  -(p_{n,\ast }^{h},\nabla \cdot {v^{h}})
\\
=(f(t_{n,\ast }),v^{h})\qquad \forall v^{h}\in X^{h},  \label{eq:DLN-NSE}\\
(\nabla \cdot {u_{n+1}^{h}},q^{h})=0\qquad \forall q^{h}\in Q^{h}.  \notag
\end{gather}%
\noindent Under the discrete inf-sup condition, \eqref{eq:DLN-NSE} 
is equivalent to
\begin{align}  \label{eq:DLN-NSEinfsup}
\Big(\frac{{\alpha _{2}}{u_{n+1}^{h}}+{\alpha _{1}}{u_{n}^{h}}+{\alpha _{0}}{%
u_{n-1}^{h}}}{\widehat{k}_{n}},v^{h}\Big)+\nu (\nabla {u_{n,\ast }^{h}}%
,\nabla {v^{h}})+b^{\ast }(u_{n,\ast }^{h},u_{n,\ast }^{h},{v^{h}}%
)=(f_{n,\ast },v^{h}),\quad \forall v^{h}\in V^{h}.
\end{align}

\noindent Furthermore, we need the following variable timestep, discrete
Gronwall inequality (see Heywood and Rannacher \cite{Hey2} for the proof).

\begin{lemma}
\label{lemma:discrete-Gronwall}
Let $a_{n},b_{n},c_{n},d_{n},k,B$ be nonnegative numbers, for integers $%
n\geq 0$ such that
\begin{equation*}
a_{\ell}+ k \sum_{n=0}^{\ell}b_{n}\leq
k \sum_{n=0}^{\ell}d_{n}a_{n}+k\sum_{n=0}^{\ell}c_{n}+B\ \ \ \text{for
\ }\ell \geq 0.
\end{equation*}%
Suppose that $kd_{n}<1$ for all $n$, then
\begin{equation*}
a_{\ell}+k\sum_{n=0}^{\ell}b_{n}\leq \exp { \left(\sum_{n=0}^{\ell}
\frac{kd_{n}}{1 - kd_{n}} \right) }\left(
k\sum_{n=0}^{\ell}c_{n}+B\right) \ \ \ \text{for \ }\ell\geq 0.
\end{equation*}
\end{lemma}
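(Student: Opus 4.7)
The plan is to reduce the inequality to a cleaner form by absorbing the $b_n$ sum and then proceed by induction, with one elementary exponential estimate as the workhorse. First, I would set
$$ z_\ell := a_\ell + k\sum_{n=0}^\ell b_n, \qquad T_\ell := k\sum_{n=0}^\ell c_n + B. $$
Since $b_n \geq 0$ gives $a_n \leq z_n$ for every $n$, the hypothesis sharpens into
$$ z_\ell \;\leq\; k\sum_{n=0}^\ell d_n z_n + T_\ell, $$
so that bounding $z_\ell$ is exactly what the lemma demands. Using $kd_\ell < 1$ to isolate the $n=\ell$ term on the left yields the useful recursive form
$$ z_\ell \;\leq\; \frac{k}{1-kd_\ell}\sum_{n=0}^{\ell-1} d_n z_n \,+\, \frac{T_\ell}{1-kd_\ell}. $$

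Next, I would record the elementary inequality $\tfrac{1}{1-x} \leq \exp\!\bigl(x/(1-x)\bigr)$ for $x \in [0,1)$, which follows because $f(x) := -\log(1-x) - x/(1-x)$ satisfies $f(0)=0$ and $f'(x) = -x/(1-x)^2 \leq 0$. Applied with $x = kd_n$, this converts products of the factors $1/(1-kd_n)$ into the single exponential that appears in the conclusion. With this lemma in hand, I would induct on $\ell$: the base case $\ell = 0$ is immediate from $(1-kd_0)z_0 \leq T_0$, and for the induction step I would substitute the inductive bound $z_n \leq \exp\!\bigl(\sum_{m=0}^{n} kd_m/(1-kd_m)\bigr)\,T_n$ into the sum, use that $T_n$ is nondecreasing in $n$ (since $c_n, B \geq 0$) to bound every $T_n$ by $T_\ell$, and then collect and simplify the resulting exponential factors.

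The only real obstacle is bookkeeping: one must verify that combining the prefactor $1/(1-kd_\ell)$ with the inductive exponential at $n=\ell-1$ produces precisely $\exp\!\bigl(\sum_{n=0}^\ell kd_n/(1-kd_n)\bigr)$ rather than something larger. An equivalent clean route, which avoids chasing exponentials in the induction, is to set $H_\ell := k\sum_{n=0}^\ell d_n z_n$ and derive from $z_\ell \leq H_\ell + T_\ell$ the linear recurrence
$$ H_\ell \;\leq\; \frac{1}{1-kd_\ell}\,H_{\ell-1} + \frac{kd_\ell}{1-kd_\ell}\,T_\ell, $$
iterate, apply the exponential estimate product-wise, and then use $z_\ell \leq H_\ell + T_\ell$ once more together with monotonicity of $T_\ell$ to reach the stated form. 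Either route gives the advertised bound in finitely many lines of routine algebra, which is why the authors are content to cite Heywood and Rannacher.
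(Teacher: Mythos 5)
The paper never proves this lemma itself -- it is quoted from Heywood and Rannacher and the proof is delegated to that reference -- so there is no internal argument to compare against; your plan is precisely the standard argument behind the cited result. The reduction to $z_\ell \leq k\sum_{n=0}^{\ell} d_n z_n + T_\ell$, the isolation of the implicit term using $kd_\ell<1$, and the elementary estimate $\tfrac{1}{1-x}\leq \exp\!\bigl(x/(1-x)\bigr)$ are exactly the right ingredients, and both of your proposed routes can be completed to give the stated constant.

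One caution on where the real work sits: the ``bookkeeping'' in the inductive route is a bit more than combining the prefactor $1/(1-kd_\ell)$ with the inductive exponential at $n=\ell-1$. After inserting the inductive bounds and using $T_n\leq T_\ell$, you must verify
$1+\sum_{n=0}^{\ell-1} kd_n\exp\!\bigl(\sum_{m=0}^{n}\tfrac{kd_m}{1-kd_m}\bigr)\leq \exp\!\bigl(\sum_{m=0}^{\ell-1}\tfrac{kd_m}{1-kd_m}\bigr)$,
which requires its own short secondary induction based on $(1-x)e^{x/(1-x)}\geq 1$; simply bounding every exponential in the sum by the largest one does not close the argument. Likewise, in the $H_\ell$ route the iterated recurrence gives $H_\ell\leq \sum_{n=0}^{\ell}\tfrac{kd_n}{1-kd_n}\,T_n\prod_{m=n+1}^{\ell}\tfrac{1}{1-kd_m}$, and after the product-wise exponential estimate the sum must be collapsed by the telescoping inequality $a e^{s}\leq e^{a+s}-e^{s}$ (equivalently $a\leq e^{a}-1$), not by termwise crude bounds. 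With either of these finishing steps written out, your proof is correct and matches the Heywood--Rannacher argument the paper cites.
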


\section{Stability of DLN for the NSE}

\label{section:stabilityDLN}

In this section, we prove the unconditional, long time, variable timestep
energy-stability of \eqref{eq:DLN-NSE}, using the $G$-stability property %
\eqref{eq:Gstab} of the method.

\begin{theorem}[Unconditional, Long Time Stability] \label{theorem:DLN-NSEStab}
The one-leg DLN method by \eqref{eq:DLN-NSE} or \eqref{eq:DLN-NSEinfsup} is
unconditionally, long-time stable: for any integer $M>1$,
\begin{gather}
{\frac{1}{4}}(1+\theta )\Vert u_{M}^{h}\Vert ^{2}+{\frac{1}{4}}(1-\theta
)\Vert u_{M-1}^{h}\Vert ^{2}+\sum_{n=1}^{M-1}\Bigg\|\sum_{\ell =0}^{2}{%
a_{\ell }^{(n)}}{u_{n-1+\ell }^{h}}\Bigg\|^{2}+\frac{\nu }{2}\sum_{n=1}^{M-1}%
{\widehat{k}_{n}}\Vert \nabla {u_{n,\ast }^{h}}\Vert ^{2}  \notag \\
\leq \frac{1}{2{\nu }}\sum_{n=1}^{M-1}\widehat{k}_{n}\Vert f(t_{n,\ast
})\Vert _{\ast }^{2}+\frac{1}{4}(1+\theta )\Vert u_{1}^{h}\Vert ^{2}+\frac{1%
}{4}(1-\theta )\Vert u_{0}^{h}\Vert ^{2},  \notag
\end{gather}%
where $a_{i}^{(n)},i=0,1,2$, given previously by \eqref{eq:aicoeff}, are
\begin{equation*}
a_{1}^{(n)}=-\frac{\sqrt{\theta \left( 1-{\theta }^{2}\right) }}{\sqrt{2}%
\left( 1+\varepsilon _{n}\theta \right) },\ a_{2}^{(n)}=-\frac{1-\varepsilon
_{n}}{2}a_{1}^{(n)},\ a_{0}^{(n)}=-\frac{1+\varepsilon _{n}}{2}a_{1}^{(n)}.
\end{equation*}
\end{theorem}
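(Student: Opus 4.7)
The proof is a textbook energy argument that hinges on testing with the one-leg intermediate value and then invoking the $G$-stability identity of Lemma \ref{theorem:GstabilityDLN}. The plan is as follows.

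First I would work with the $V^h$-formulation \eqref{eq:DLN-NSEinfsup}, thereby eliminating the pressure at the outset, and set the test function $v^h = u_{n,\ast}^h = \sum_{\ell=0}^{2}\beta_\ell^{(n)} u_{n-1+\ell}^h$. By Lemma \ref{lemma:skew-symmetry} the trilinear term vanishes, since $b^{\ast}(u_{n,\ast}^h, u_{n,\ast}^h, u_{n,\ast}^h)=0$. The viscous term becomes $\nu \|\nabla u_{n,\ast}^h\|^2$, and on the right we are left with $(f(t_{n,\ast}), u_{n,\ast}^h)$.

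Next, the discrete time-derivative term is precisely the pairing
\[
\Big(\sum_{\ell=0}^{2}\alpha_\ell u_{n-1+\ell}^h, \sum_{\ell=0}^{2}\beta_\ell^{(n)} u_{n-1+\ell}^h\Big),
\]
divided by $\widehat{k}_n$. Applying the $G$-stability identity \eqref{eq:Gstab} and multiplying the whole equation through by $\widehat{k}_n$ yields
\[
\begin{Vmatrix} u_{n+1}^h \\ u_n^h \end{Vmatrix}_{G(\theta)}^2 - \begin{Vmatrix} u_n^h \\ u_{n-1}^h \end{Vmatrix}_{G(\theta)}^2 + \Big\|\sum_{\ell=0}^{2} a_\ell^{(n)} u_{n-1+\ell}^h\Big\|^2 + \nu\, \widehat{k}_n \|\nabla u_{n,\ast}^h\|^2 = \widehat{k}_n\,(f(t_{n,\ast}), u_{n,\ast}^h).
\]
I would then bound the forcing pairing by the dual norm $\|\cdot\|_\ast$ and Young's inequality,
\[
\widehat{k}_n\,(f(t_{n,\ast}), u_{n,\ast}^h) \le \widehat{k}_n \|f(t_{n,\ast})\|_\ast \|\nabla u_{n,\ast}^h\| \le \frac{\widehat{k}_n}{2\nu}\|f(t_{n,\ast})\|_\ast^2 + \frac{\nu\, \widehat{k}_n}{2}\|\nabla u_{n,\ast}^h\|^2,
\]
absorbing the second term into the viscous contribution on the left.

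Finally, I would sum the resulting inequality from $n=1$ to $n=M-1$. The $G$-norm differences telescope to $\|(u_M^h,u_{M-1}^h)^\top\|_{G(\theta)}^2 - \|(u_1^h,u_0^h)^\top\|_{G(\theta)}^2$; using the explicit form of $G(\theta)$ in \eqref{eq:Gmatrix}--\eqref{eq:G-norm} recovers exactly the quartet of boundary terms in the statement. No Gronwall argument is required because the viscous half that survives on the left is nonnegative and appears directly on the left-hand side, giving the long-time (unconditional in $\widehat{k}_n$) bound.

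There is no genuine obstacle: the only subtlety is bookkeeping — verifying that the pressure genuinely drops out under the $V^h$-test and that the $G$-stability identity \eqref{eq:Gstab} is applied with matching weights $\alpha_\ell$ and $\beta_\ell^{(n)}$ (so that the dissipation term $\|\sum_\ell a_\ell^{(n)} u_{n-1+\ell}^h\|^2$ appears with the correct $\varepsilon_n$-dependent coefficients \eqref{eq:aicoeff}). The fact that the $G$-matrix is independent of the step ratio, emphasized just before Lemma \ref{theorem:GstabilityDLN}, is what makes the telescoping clean across variable steps; by contrast an analogous argument for BDF2 would require a ratio-dependent $G$-matrix and the clean telescoping would fail.
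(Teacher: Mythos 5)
Your proposal is correct and follows essentially the same route as the paper: test \eqref{eq:DLN-NSEinfsup} with $v^h=u_{n,\ast}^h$, kill the nonlinearity by skew-symmetry \eqref{eq:skew-symmetry}, bound the forcing via the dual norm and Young's inequality, apply the $G$-stability identity \eqref{eq:Gstab}, and telescope the $G$-norm differences after summing over $n$. No substantive differences to report.
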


\begin{proof}
For $n=1,\cdots, M-1$, set $v^{h}=u_{n,\ast}^{h}$ in \eqref{eq:DLN-NSEinfsup}.
Then, using the skew-symmetry relation \eqref{eq:skew-symmetry}
\begin{confidential}
\color{cyan}
\begin{align*}
{\frac{1}{\widehat{k}_n }} \Big( \sum_{l=0}^{2}{\alpha_{l}}{u_{n-1+l}^{h}} , u_{n,\ast}^{h} \Big)
+ \nu \| \nabla{u_{n,\ast}^{h}} \|^{2}
= ( f  (  t_{n,\ast} )  , \nabla{u_{n,\ast}^{h}} )  .
\end{align*}
Using definition of $\Vert\cdot\Vert_{\ast}$ norm and Young's inequality
\begin{align*}
\left(  f\left(  t_{n,\ast}\right)  ,\nabla{u_{n,\ast}^{h}}\right)  \leq\Vert
f\left(  t_{n,\ast}\right)  \Vert_{\ast}\Vert\nabla{u_{n,\ast}^{h}}\Vert
\leq{\frac{\nu}{2}}\Vert\nabla{u_{n,\ast}^{h}}\Vert^{2}+{\frac{1}{2{\nu}}%
}\Vert f\left(  t_{n,\ast}\right)  \Vert_{\ast}^{2}\ .
\end{align*}
Thus for $n=1,2,\cdots M-1$
\\
\normalcolor
\end{confidential}
and the Cauchy-Schwarz inequality, we obtain
\begin{align*}
\Big( \sum_{\ell=0}^{2}{\alpha_{\ell}}{u_{n-1+\ell}^{h}}\ ,u_{n,\ast}^{h} %
\Big) + \frac{\nu}{2} \widehat{k}_n \| \nabla{u_{n,\ast}^{h}} \|^{2} \leq
\frac{1}{2{\nu}} \widehat{k}_n \| f ( t_{n,\ast} ) \|_{\ast}^{2}.
\end{align*}
The $G$-stability relation \eqref{eq:Gstab} implies%
\begin{align*}
\begin{Vmatrix}
{u_{n+1}^{h}} \\
{u_{n}^{h}}%
\end{Vmatrix}%
_{G(\theta)}^{2} -
\begin{Vmatrix}
{u_{n}^{h}} \\
{u_{n-1}^{h}}%
\end{Vmatrix}%
_{G(\theta)}^{2} + \Bigg\| \sum_{\ell=0}^{2} a_{\ell}^{(n)} u_{n-1+\ell}^{h} %
\Bigg\| ^{2} + \frac{\nu}{2} \widehat{k}_n \| \nabla{u_{n,\ast}^{h}} \|^2
\leq \frac{1}{2{\nu}} \widehat{k}_n \| f ( t_{n,\ast}) \|_{\ast}^{2}.
\end{align*}
Summation over $n$ from $1$ to $M-1$, and the definition \eqref{eq:G-norm}
yields
\begin{confidential}
\color{cyan}

\begin{align*}%
\begin{Vmatrix}
{u_{M}^{h}}\\
{u_{M-1}^{h}}%
\end{Vmatrix}
_{G(\theta)}^{2}-%
\begin{Vmatrix}
{u_{1}^{h}}\\
{u_{0}^{h}}%
\end{Vmatrix}
_{G(\theta)}^{2}+\sum_{n=1}^{M-1}\left\Vert \sum_{l=0}^{2}{a_{l}^{(n)}%
}{u_{n-1+l}^{h}}\right\Vert ^{2}\\
+\sum_{n=1}^{M-1}{\frac{\nu}{2}}{\widehat{k}_n }\left\Vert \nabla{u_{n,\ast}^{h}%
}\right\Vert ^{2}\leq\sum_{n=1}^{M-1}{\frac{1}{2{\nu}}}{\widehat{k}_n }\Vert
f\left(  t_{n,\ast}\right)  \Vert_{\ast}^{2}\ .
\end{align*}
\noindent The above inequality results in the proof.
\\
\normalcolor
\end{confidential}
the conclusion.
\end{proof}

The above stability result identifies the DLN method's kinetic energy and
numerical energy dissipation rates: %
\begin{align*}
\begin{array}{l}
\mathcal{E}_n= \displaystyle \frac{1}{4} (1+\theta) \| u_{n}^{h} \|^2 +
\frac{1}{4} (1-\theta) \| u_{n-1}^{h} \|^2 , \vspace{.2cm} \ \
\mathcal{D}_n = \displaystyle \frac{1}{\widehat{k}_n} \Big\| %
\sum_{\ell=0}^{2} a_{\ell}^{(n)} u_{n-1+\ell}^{h} \Big \|^{2}.%
\end{array}%
\end{align*}
%

\section{Variable Time-step Error Analysis}

\label{section:erroranalysis}

In this section, we analyze the error in the approximate solutions by the one-leg
DLN method for variable time steps. The discrete time error analysis
requires norms that are discrete time analogues of the norms used in the
continuous time case. As before, let $[0,T]$ denote the whole time interval, $%
P_{0}=\left\{ t_{n}\right\} _{n=0}^{M}$ be a partition on $[0,T]$ and $%
\left\{ k_{n}\right\} _{n=0}^{M-1}$ be the set of time-step sizes. For a function $%
v\left( x,t\right) $ and $1\leq p<\infty$, we define
\begin{gather*}
\left\Vert \left\vert v\right\vert \right\Vert _{\infty,k} = \max_{0\leq
n\leq M} \Vert v_{n} \Vert _{k}, \ \Vert |v| \Vert_{p,k}^{P_{0},L} = %
\Big( \sum_{n=0}^{M-1} k_{n}\Vert v_{n}\Vert_{k}^{p} \Big) ^{1/p},\ \Vert|
 v |\Vert_{p,k}^{P_{0},R} = \Big( \sum_{n=1}^{M}k_{n-1}\Vert v_{n}\Vert_{k}^{p} \Big) %
^{1/p}.
\end{gather*}
In the above definitions, the last two terms are forms of Riemann sums in which the
function $v$ is evaluated at the left endpoint or right endpoint of each
small time interval $[t_{n},t_{n+1}]$. $P$ is the given partition on $[0,T]$
and $L,R$ means that the sum involves the value of the function at the left endpoint
or right endpoint of each time interval $[t_{n},t_{n+1}]$ respectively.
\newline

\noindent Then we define two new partitions related to partition $P_{0}$: If
$M$ is odd
\begin{align*}
& P_{1} := \left\{ s_{\ell} : 0 = s_{0} < s_{1} < \cdots< s_{\frac{M+1}{2}} = T
\ \text{and \ } s_{\ell} = t_{2\ell} \ \text{for \ } 1 \leq \ell \leq\frac{M-1}{2}
\right\} , \\
& P_{2} := \left\{ s_{\ell} : 0 = s_{0} < s_{1} < \cdots< s_{\frac{M+1}{2}} = T
\ \text{and \ } s_{\ell} = t_{2\ell-1} \ \text{for \ } 1 \leq \ell \leq\frac{M-1}{2}
\right\} ,
\end{align*}
and if $M$ is even
\begin{align*}
& P_{1} := \left\{ s_{\ell} : 0 = s_{0} < s_{1} < \cdots< s_{\frac{M}{2}} = T \
\text{and \ } s_{\ell} = t_{2\ell} \ \text{for \ } 1 \leq \ell \leq\frac{M}{2} - 1
\right\} , \\
& P_{2} := \left\{ s_{\ell} : 0 = s_{0} < s_{1} < \cdots< s_{\frac{M}{2}+1} = T
\ \text{and \ } s_{\ell} = t_{2\ell-1} \ \text{for \ } 1 \leq \ell \leq\frac{M}{2}
\right\} .
\end{align*}
\noindent Based on the partitions above, define
\begin{align*}
& \left\Vert \left| v \right| \right\Vert _{p,k} := \left( \sum_{\ell=0}^{2}
\left( \left\Vert \left| v \right| \right\Vert _{p,k}^{P_{\ell},R} \right) ^{p}
+ \sum_{\ell=0}^{2} \left( \left\Vert \left| v \right| \right\Vert
_{p,k}^{P_{\ell},L} \right) ^{p} \right) ^{1/p} .
\end{align*}
\noindent Furthermore given the partitions $\left\{ P_{\ell} \right\}_{\ell=0}^{2}$
above, define the new partitions $\widetilde{P}_{\ell} $ ($\ell =1,2$): if $M$ is
odd,
\begin{align*}
& \widetilde{P}_{1} := \left\{ s_{\ell} : 0 = s_{0} < s_{1} < \cdots< s_{\frac{%
M-1}{2}} = t_{M-1} \ \text{and \ } s_{\ell} = t_{2\ell} \ \text{for \ } 1 \leq \ell
\leq\frac{M-3}{2} \right\} , \\
& \widetilde{P}_{2} := \left\{ s_{\ell} : t_{1} = s_{0} < s_{1} < \cdots< s_{%
\frac{M-1}{2}} = T \ \text{and \ } s_{\ell} = t_{2\ell+1} \ \text{for \ } 1 \leq \ell
\leq\frac{M-3}{2} \right\} ,
\end{align*}
if $M$ is even,
\begin{align*}
& \widetilde{P}_{1} := \left\{ s_{\ell} : 0 = s_{0} < s_{1} < \cdots< s_{\frac {%
M}{2}} = T \ \text{and \ } s_{\ell} = t_{2\ell} \ \text{for \ } 1 \leq \ell \leq
\frac{M}{2} - 1 \right\} , \\
& \widetilde{P}_{2} := \left\{ s_{\ell} : t_{1} = s_{0} < s_{1} < \cdots< s_{%
\frac{M}{2}-1} = t_{M-1} \ \text{and \ } s_{\ell} = t_{2\ell+1} \ \text{for \ } 1
\leq \ell \leq\frac{M}{2} - 2 \right\} .
\end{align*}
\noindent For $\widetilde{P}_{1}$, we have $t_{2\ell-1} \in[t_{2\ell-2},t_{2\ell}] =
[s_{\ell-1},s_{\ell}]$ and let $\bar{s}_{\ell} := t_{2\ell-1,\ast}$. Similarly for
$\widetilde{P}_{2}$, $t_{2\ell} \in[t_{2\ell-1},t_{2\ell+1}] = [s_{\ell-1},s_{\ell}]$,
$\bar{s}_{\ell} := t_{2\ell, \ast}$. For the function $v(x,t)$ above, define
\begin{align*}
\left\Vert \left| v_{\ast} \right| \right\Vert _{p,k}^{\widetilde{P}_{i}} := %
\Bigg( \sum_{\ell=1}^{\# \widetilde{P}_{i} - 1} \left( s_{\ell} - s_{\ell-1} \right)
\left\Vert v \left( \bar{s}_{\ell} \right) \right\Vert _{k}^{p} \Bigg) ^{1/p},
\ \ \ i = 1,2,
\end{align*}
where $\# \widetilde{P}_{i}$ is number of set $\widetilde{P}_{i}$, and
\begin{align*}
\| | v_{\ast} | \| _{p,k} := \Big( \big( \left\Vert \left| v_{*} \right|
\right\Vert _{p,k}^{\widetilde{P}_{1}} \big) ^{p} + \big( \left\Vert \left|
v_{*} \right| \right\Vert_{p,k}^{\widetilde{P}_{2}} \big) ^{p} \Big) ^{1/p} .
\end{align*}
\noindent Now we introduce the following lemma to be used often in error
analysis.

\begin{lemma}
\label{lemma:disnormEst}
Let $v$ be a continuous function on interval $[0,T] \times\Omega$ and $\left\{
P_{\ell} \right\} _{\ell=0}^{2}, \{ \widetilde{P}_{\ell} \}_{\ell=1}^{2}$ be
the partitions on $[0,T]$ same as stated above. Then for any $1 \leq p <
\infty$, we have
\begin{align*}
\sum_{n=1}^{M-1} \left( k_{n} + k_{n-1} \right) \sum_{\ell=0}^{2} \left\Vert
v_{n-1+\ell} \right\Vert _{p,k+1}^{p} \leq{\ \left\Vert \left| v \right|
\right\Vert _{p,k} }^{p} ,
\end{align*}
and
\begin{align*}
\sum_{n=1}^{M-1} \left( k_{n} + k_{n-1} \right) \left\Vert v \left( t_{n,*}
\right) \right\Vert _{k}^{p} \leq\left( \left\Vert \left| v_{*} \right|
\right\Vert _{p,k} \right) ^{p} .
\end{align*}
\end{lemma}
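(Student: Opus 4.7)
The plan is to prove each inequality by decomposing the left-hand sum and matching every resulting piece to a Left or Right Riemann sum that appears in the right-hand side.

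For the first inequality, I would split the inner sum $\sum_{\ell=0}^{2}\|v_{n-1+\ell}\|_k^p$ into its three terms $\ell=0,1,2$. The middle piece $\sum_{n=1}^{M-1}(k_n+k_{n-1})\|v_n\|_k^p$ is the easy one: it coincides with the sum of the Left and Right Riemann sums on $P_0$ up to the two non-negative boundary contributions $k_0\|v_0\|_k^p$ and $k_{M-1}\|v_M\|_k^p$, so it is bounded by $(\|v\|_{p,k}^{P_0,L})^p+(\|v\|_{p,k}^{P_0,R})^p$. The off-center pieces ($\ell=0$ and $\ell=2$) are handled by splitting $n$ according to parity. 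When $n$ is odd, both $n-1$ and $n+1$ are even; the consecutive pair $(v_{n-1},v_{n+1})$ matches $(v(s_m),v(s_{m+1}))$ in $P_1$, and the weight $k_n+k_{n-1}$ is exactly the $P_1$-step $s_{m+1}-s_m$. Hence the odd-$n$ part of the $\ell=0$ sum sits inside the Left Riemann sum on $P_1$, and the odd-$n$ part of the $\ell=2$ sum sits inside the Right Riemann sum on $P_1$. Symmetrically, when $n$ is even the pair $(n-1,n+1)$ lines up with consecutive interior nodes of $P_2$ and $k_n+k_{n-1}$ is the matching $P_2$-step, so the even-$n$ contributions to the $\ell=0,2$ pieces are absorbed by the Left and Right Riemann sums over $P_2$. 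Adding all six contributions produces the stated upper bound.

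For the second inequality, I would observe that the definitions of $\widetilde{P}_1$ and $\widetilde{P}_2$ are engineered so that the designated starred nodes $\bar{s}_\ell$ are exactly $\{t_{n,\ast}:n\text{ odd}\}$ for $\widetilde{P}_1$ and $\{t_{n,\ast}:n\text{ even}\}$ for $\widetilde{P}_2$. Moreover, each such $\bar{s}_\ell$ lies inside $[s_{\ell-1},s_\ell]=[t_{n-1},t_{n+1}]$, whose width satisfies $s_\ell-s_{\ell-1}=k_n+k_{n-1}$. Consequently $(\|v_\ast\|_{p,k}^{\widetilde{P}_1})^p$ collects exactly the odd-$n$ terms of the left-hand sum and $(\|v_\ast\|_{p,k}^{\widetilde{P}_2})^p$ collects exactly the even-$n$ terms, so their sum recovers $\sum_{n=1}^{M-1}(k_n+k_{n-1})\|v(t_{n,\ast})\|_k^p$; in fact this inequality is an equality.

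The main obstacle is the bookkeeping forced by the parity-dependent definitions: each of $P_1,P_2,\widetilde{P}_1,\widetilde{P}_2$ is specified separately for $M$ odd and $M$ even, and one must check that the leftover boundary intervals (for example the short final step $[t_{M-1},T]$ of $P_1$ when $M$ is odd, or the initial step $[0,t_1]$ of $P_2$) contribute only non-negative extras to the right-hand side. The matching itself is purely algebraic, but verifying that the Riemann sums in fact cover every term of the LHS for both parities of $M$ is where care is needed.
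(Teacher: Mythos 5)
Your proposal is correct and follows essentially the same route as the paper's own proof: split the inner sum by $\ell=0,1,2$, match the $\ell=1$ piece to the left/right Riemann sums on $P_0$, match the $\ell=0,2$ pieces by parity of $n$ to the left/right sums on $P_1$ and $P_2$ with weights $k_n+k_{n-1}=t_{n+1}-t_{n-1}$, absorb the leftover boundary intervals as non-negative extras, and observe that the second bound is in fact an equality via the construction of $\widetilde{P}_1$, $\widetilde{P}_2$. The only difference is presentational: the paper writes out the index shifts explicitly for $M$ even and $M$ odd, which is exactly the bookkeeping you flag as the remaining care point.
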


\noindent We can also define the discrete norm of functions with respect to the dual norm $%
\left\Vert \cdot\right\Vert _{*}$, and derive a related lemma similar
to Lemma \ref{lemma:disnormEst}. Moreover we need the following lemma dealing with consistency
error.

\begin{lemma}[consistency errors]
\label{lemma:consistError}
Let $u(t)$ be any continuous function on $[0,T]$. If $u_{tt} \in
L^{2}\left(\Omega \times(t_{n-1},t_{n}) \right)$, then
\begin{align*}
\left\Vert \sum_{\ell=0}^{2} \beta_{\ell}^{(n)}u(t_{n-1+\ell})
- u \left( t_{n,*} \right) \right\Vert^{2} \leq C \left(
k_{n} + k_{n-1} \right) ^{3} \int_{t_{n-1}}^{t_{n+1}} \left\Vert u_{tt}
\right\Vert ^{2} dt.
\end{align*}
For $\theta\in[0,1)$, if $u_{ttt} \in
L^{2}\left(\Omega \times(t_{n-1},t_{n}) \right)$, then
\begin{align*}
\left\Vert \frac{{\alpha_{2}}{u(t_{n+1})} + {\alpha_{1}}{u(t_{n})} + {\alpha_{0}}{%
u(t_{n-1})}}{\widehat{k}_n } - u_{t} \left( t_{n,*} \right) \right\Vert ^{2}
\leq C \left( \theta\right) \left( k_{n} + k_{n-1} \right) ^{3}
\int_{t_{n-1}}^{t_{n+1}} \Vert u_{ttt} \Vert^{2} dt .
\end{align*}
\end{lemma}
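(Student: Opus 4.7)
The plan is to Taylor expand each $u(t_{n-1+\ell})$ about the collocation point $t_{n,*}$ with integral remainder, then exploit moment identities satisfied by the DLN coefficients $\{\alpha_\ell,\beta_\ell^{(n)}\}$ to cancel the low-order terms. For the first inequality, I would use $\sum_{\ell=0}^{2}\beta_\ell^{(n)}=1$ together with $\sum_{\ell=0}^{2}\beta_\ell^{(n)}(t_{n-1+\ell}-t_{n,*})=0$ (the latter being exactly the definition of $t_{n,*}$). Starting from
\begin{equation*}
u(t_{n-1+\ell})-u(t_{n,*})-u_t(t_{n,*})(t_{n-1+\ell}-t_{n,*})=\int_{t_{n,*}}^{t_{n-1+\ell}}(t_{n-1+\ell}-s)\,u_{tt}(s)\,ds,
\end{equation*}
multiplying by $\beta_\ell^{(n)}$ and summing causes both the function values and the first-derivative terms to cancel. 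Minkowski followed by Cauchy--Schwarz in time on each remainder yields a bound of order $C|t_{n-1+\ell}-t_{n,*}|^{3}\int_{t_{n-1}}^{t_{n+1}}\|u_{tt}\|^{2}\,dt$, and using $|t_{n-1+\ell}-t_{n,*}|\le C(k_n+k_{n-1})$ together with the triangle inequality closes the estimate.

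For the second inequality, I would Taylor expand one order further,
\begin{equation*}
u(t_{n-1+\ell})=u(t_{n,*})+u_t(t_{n,*})\tau_\ell+\tfrac{1}{2}u_{tt}(t_{n,*})\tau_\ell^{2}+\tfrac{1}{2}\!\int_{t_{n,*}}^{t_{n-1+\ell}}\!(t_{n-1+\ell}-s)^{2}u_{ttt}(s)\,ds,
\end{equation*}
with $\tau_\ell=t_{n-1+\ell}-t_{n,*}$, multiply by $\alpha_\ell$, and sum. Three moment identities drive the argument: $\sum_\ell \alpha_\ell=0$ (immediate from $\alpha_0+\alpha_1+\alpha_2=0$), $\sum_\ell \alpha_\ell \tau_\ell=\widehat{k}_n$ (immediate from the definition of $\widehat{k}_n$), and crucially $\sum_\ell \alpha_\ell \tau_\ell^{2}=0$. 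The last identity is the defining feature of the DLN collocation point $t_{n,*}$ and must be checked as an algebraic identity in $\theta$ and $\varepsilon_n$ using \eqref{eq:DLNcoeffs-explicit}. Granted these three identities, the zeroth-, first-, and second-order Taylor terms collapse to $\widehat{k}_n\,u_t(t_{n,*})$; dividing by $\widehat{k}_n$ leaves $u_t(t_{n,*})+\widehat{k}_n^{-1}\sum_\ell \alpha_\ell R_\ell$, and the same Cauchy--Schwarz argument as before bounds the weighted remainder sum.

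The main obstacle is the second-moment cancellation $\sum_\ell \alpha_\ell\tau_\ell^{2}=0$: this is the whole reason for the peculiar form of $\beta_\ell^{(n)}$ in \eqref{eq:DLNcoeffs-explicit} and must be verified directly from those formulas, uniformly for all admissible $\varepsilon_n\in(-1,1)$. The $\theta$-dependence of $C(\theta)$ then enters through the lower bound $\widehat{k}_n\ge \tfrac{1-\theta}{2}(k_n+k_{n-1})$, which contributes a factor of order $(1-\theta)^{-2}$ after the division by $\widehat{k}_n$; this is precisely what forces the hypothesis $\theta\in[0,1)$, even though the limiting midpoint rule ($\theta=1$) admits its own second-order estimate by symmetry.
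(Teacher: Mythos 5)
Your proposal is correct and takes essentially the same route as the paper: the paper's (very terse) proof is likewise Taylor expansion with integral remainder combined with the DLN moment/order conditions, merely centered at $t_n$ rather than at $t_{n,*}$. In particular, your key cancellation $\sum_{\ell=0}^{2}\alpha_{\ell}\tau_{\ell}^{2}=0$ does hold --- it is equivalent to the second-order condition $\sum_{\ell=0}^{2}\alpha_{\ell}t_{n-1+\ell}^{2}=2\widehat{k}_{n}\,t_{n,*}$ built into the coefficients \eqref{eq:DLNcoeffs-explicit} (a direct computation in $\theta,\varepsilon_{n}$ reduces both sides to $\tfrac{1}{4}(k_{n}+k_{n-1})^{2}(\theta+2\varepsilon_{n}+\theta\varepsilon_{n}^{2})$), and your use of $\widehat{k}_{n}\ge\tfrac{1-\theta}{2}(k_{n}+k_{n-1})$ correctly accounts for the restriction $\theta\in[0,1)$ and the $\theta$-dependence of the constant.
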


\begin{proof}
The proof for smooth functions is simply Taylor expansion with integral reminder
after expanding
function $u\left( t_{n+1} \right),u\left( t_{n-1} \right)$ and
$u \left( t_{n,*} \right) $ at $t_{n}$. For less smooth
functions it then follows by a density argument.
\end{proof} 

\noindent Now we introduce the main theorem about
error analysis under the following timestep condition:
\begin{gather} \label{eq:timestep-restriction}
C\left( \theta \right)\sum_{\ell=0}^{2} \left( {{\nu }^{-3}}\Vert
\nabla {u_{n-1+\ell,\ast }}\Vert ^{4}+1\right) \widehat{k}_{n-1+\ell}< 1
\end{gather}
for $2 \leq n \leq M-2$.

\begin{theorem}
\label{theorem:DLN-NSE-Error}
Let $(u(t),p(t))$ be a sufficiently smooth, strong solution of the NSE.
When applying one-leg DLN's algorithm \eqref{eq:DLN-NSE} or
\eqref{eq:DLN-NSEinfsup},  there is a constant $C>0$ such that
under timestep condition \eqref{eq:timestep-restriction}, the following error
estimates hold
\begin{equation*}
\Vert |u-u^{h}|\Vert _{\infty ,0}\leq Ch^{k+1}\Vert |u|\Vert _{\infty ,k+1}+F%
\Big(h,\max_{1\leq n\leq M-1}(k_{n}+k_{n-1})\Big),
\end{equation*}%
and
\begin{align*}
&\Big({\nu }\sum_{n=1}^{M-1}\widehat{k}_{n}\Vert \nabla (u(t_{n,\ast
})-u_{n,\ast }^{h})\Vert ^{2}\Big)^{\frac{1}{2}} \\
\leq &C\nu ^{\frac{1}{2}}\max_{1\leq n\leq M-1}\{(k_{n}+k_{n-1})^{2}\}\Vert
\nabla {u_{tt}}\Vert _{2,0}+C{\nu }^{\frac{1}{2}}h^{k}\Vert |u|\Vert
_{2,k+1}+F\Big(h,\max_{1\leq n\leq M-1}(k_{n}+k_{n-1})\Big),
\end{align*}%
where
\begin{gather*}
F\Big(h,\max_{1\leq n\leq M-1}(k_{n}+k_{n-1})\Big)=C{\nu }^{\frac{1}{2}%
}h^{k}\Vert |u|\Vert _{2,k+1} \\
+C{\nu }^{-\frac{1}{2}}h^{k+\frac{1}{2}}\Big(\Vert |u|\Vert
_{4,k+1}^{2}+\Vert |\nabla {u}|\Vert _{4,0}^{2}\Big)+C{{\nu }^{-\frac{1}{2}}}%
h^{s+1}\Vert |p_{\ast }|\Vert _{2,s+1} \\
+C{\nu }^{-\frac{1}{2}}h^{k}\big(\Vert |u|\Vert _{4,k+1}^{2}+{\nu }%
^{-1}\Vert |f|\Vert _{2,\ast }+{\nu }^{-\frac{1}{2}}\Vert u_{1}^{h}\Vert +{%
\nu }^{-\frac{1}{2}}\Vert u_{0}^{h}\Vert \big) \\
+C\max_{1\leq n\leq M-1}\{(k_{n}+k_{n-1})^{2}\}\Big(\Vert u_{ttt}\Vert
_{2,0}+{\nu }^{-\frac{1}{2}}\Vert p_{tt}\Vert _{2,0}+\Vert f_{tt}\Vert _{2,0}
\\
\quad +{\nu }^{\frac{1}{2}}\Vert \nabla {u_{tt}}\Vert _{2,0}+{\nu }^{-\frac{1%
}{2}}\Vert \nabla {u_{tt}}\Vert _{4,0}^{2}+{\nu }^{-\frac{1}{2}}\Vert
|\nabla {u}|\Vert _{4,0}^{2}+{\nu }^{-\frac{1}{2}}\Vert |\nabla {u_{\ast }}%
|\Vert _{4,0}^{2}\Big).
\end{gather*}%
\end{theorem}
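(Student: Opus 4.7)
The plan is to combine an error splitting with the DLN method's built-in $G$-stability. First, decompose the error pointwise in time as $u(t_n)-u_n^h=\eta_n+\phi_n^h$, with $\eta_n:=u(t_n)-I^hu(t_n)$ bounded directly by \eqref{eq:interpolation-Error} and $\phi_n^h:=I^hu(t_n)-u_n^h$ the discrete error to be estimated. Choosing $I^h$ to map into $V^h$ (available via a Fortin-type projector under the $LBB^h$ condition) keeps $\phi^h\in V^h$ throughout, eliminating any interpolant-divergence coupling in what follows.

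Second, derive the error equation by writing the NSE in weak form at $t_{n,\ast}$ and subtracting \eqref{eq:DLN-NSEinfsup}. The equation for $\phi^h$ contains three inhomogeneities: the two consistency remainders $\tau_n^{(1)}:=\widehat{k}_n^{-1}\sum_\ell\alpha_\ell u(t_{n-1+\ell})-u_t(t_{n,\ast})$ and $\tau_n^{(2)}:=u_{n,\ast}-u(t_{n,\ast})$, both controlled by Lemma \ref{lemma:consistError}, together with the pressure residual $(p(t_{n,\ast})-I^hp(t_{n,\ast}),\nabla\cdot v^h)$ which is bounded by \eqref{eq:interpolation-Error} since $v^h\in V^h$ eliminates the exact pressure. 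Setting $v^h=\phi_{n,\ast}^h$, the nonlinear difference $b^\ast(u_{n,\ast},u_{n,\ast},v^h)-b^\ast(u_{n,\ast}^h,u_{n,\ast}^h,v^h)$ expands into four $b^\ast$-terms; the term $b^\ast(u_{n,\ast}^h,\phi_{n,\ast}^h,\phi_{n,\ast}^h)$ vanishes by Lemma \ref{lemma:skew-symmetry}, and the three surviving contributions are $b^\ast(\eta_{n,\ast},u_{n,\ast},\phi_{n,\ast}^h)$, $b^\ast(u_{n,\ast}^h,\eta_{n,\ast},\phi_{n,\ast}^h)$, and the critical $b^\ast(\phi_{n,\ast}^h,u_{n,\ast},\phi_{n,\ast}^h)$.

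Third, apply the $G$-stability identity of Lemma \ref{theorem:GstabilityDLN} to the discrete time-derivative term with $v^h=\phi_{n,\ast}^h$. This produces the telescoping quantity $\|(\phi_{n+1}^h,\phi_n^h)\|_{G(\theta)}^2-\|(\phi_n^h,\phi_{n-1}^h)\|_{G(\theta)}^2$ plus the nonnegative numerical dissipation $\|\sum_\ell a_\ell^{(n)}\phi_{n-1+\ell}^h\|^2$. The linear terms and the two easy nonlinear terms are estimated by Cauchy-Schwarz and Young's inequality with $\nu$-weighted coefficients chosen so that $\tfrac{\nu}{2}\widehat{k}_n\|\nabla\phi_{n,\ast}^h\|^2$ is retained on the left. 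The critical trilinear term is bounded via the sharper inequality in \eqref{eq:conti-triform}, giving $b^\ast(\phi_{n,\ast}^h,u_{n,\ast},\phi_{n,\ast}^h)\leq C\|\phi_{n,\ast}^h\|^{1/2}\|\nabla\phi_{n,\ast}^h\|^{3/2}\|\nabla u(t_{n,\ast})\|$; Young's inequality with exponents $(4,4/3)$ then yields a coefficient $C\nu^{-3}\|\nabla u(t_{n,\ast})\|^4\|\phi_{n,\ast}^h\|^2$, which is exactly the factor appearing in the timestep condition \eqref{eq:timestep-restriction}.

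Finally, summing from $n=1$ to $M-1$, telescoping the $G$-norm, and applying the discrete Gronwall Lemma \ref{lemma:discrete-Gronwall} closes the estimate on $\phi^h$; the restriction \eqref{eq:timestep-restriction} is precisely the condition $kd_n<1$ that keeps the Gronwall factor $\exp(\sum kd_n/(1-kd_n))$ finite. Converting the Riemann-type sums on the right-hand side into the norms $\||\cdot|\|_{p,k}$ and $\||v_\ast|\|_{p,k}$ via Lemma \ref{lemma:disnormEst}, and folding in the interpolation bounds \eqref{eq:interpolation-Error}, produces the stated $L^\infty L^2$ estimate; retaining $\tfrac{\nu}{2}\sum_n\widehat{k}_n\|\nabla\phi_{n,\ast}^h\|^2$ on the left delivers the gradient estimate. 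I expect the main obstacle to be the variable-step bookkeeping: correctly distributing each local consistency bound $(k_n+k_{n-1})^3\int_{t_{n-1}}^{t_{n+1}}\|u_{ttt}\|^2\,dt$ across the partitions $\widetilde{P}_\ell$ without double counting, so that sums telescope to produce the sharp second-order rate $\max(k_n+k_{n-1})^2$, and then interpreting every right-hand side contribution in terms of the six-summand discrete norm definitions of Section \ref{section:erroranalysis}.
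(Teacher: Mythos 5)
Your outline reproduces the paper's argument in all essential respects: the same $\eta$--$\phi^h$ splitting with $\phi^h\in V^h$, the same error equation tested with $v^h=\phi_{n,\ast}^h$, the same three surviving trilinear terms with the sharp bound $b^{\ast}(\phi_{n,\ast}^h,u_{n,\ast},\phi_{n,\ast}^h)\le C\Vert\phi_{n,\ast}^h\Vert^{1/2}\Vert\nabla\phi_{n,\ast}^h\Vert^{3/2}\Vert\nabla u_{n,\ast}\Vert$ and Young with exponents $(4,4/3)$ producing the $\nu^{-3}\Vert\nabla u_{n,\ast}\Vert^{4}$ factor of \eqref{eq:timestep-restriction}, the same use of Lemma \ref{theorem:GstabilityDLN}, Lemma \ref{lemma:consistError}, Lemma \ref{lemma:disnormEst} and the discrete Gronwall Lemma \ref{lemma:discrete-Gronwall}. (Minor notational point: the relevant factor is $\Vert\nabla u_{n,\ast}\Vert$, the starred combination, not $\Vert\nabla u(t_{n,\ast})\Vert$.)

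There is, however, one concrete omission. You take $\eta_n=u(t_n)-I^h u(t_n)$ with $I^h$ a Fortin-type projector into $V^h$, and your list of inhomogeneities in the $\phi^h$-equation (two consistency remainders plus the pressure residual) silently drops the term $\bigl(\widehat{k}_n^{-1}\sum_{\ell=0}^{2}\alpha_\ell\,\eta_{n-1+\ell},\,\phi_{n,\ast}^h\bigr)$ coming from the discrete time derivative acting on $\eta$. For a generic (Fortin) interpolant this term does not vanish; estimating it forces either a summation-by-parts argument or a bound through $\eta_t$, and in either case it injects an $h^{k+1}$-times-$u_t$-regularity contribution into $F$ that is absent from the theorem statement. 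The paper avoids this entirely by choosing $U_n$ to be the $L^2$ projection of $u_n$ onto $V^h$, so that $(\sum_\ell\alpha_\ell\eta_{n-1+\ell},\phi_{n,\ast}^h)=0$ by orthogonality (and by taking $u_0^h,u_1^h$ to be these projections, so $\phi_0^h=\phi_1^h=0$ and no initial $G$-norm term survives). You should either switch to the $L^2$ projection onto $V^h$ or explicitly account for this extra term; as written the step is missing. Relatedly, for the second estimate you still need the triangle inequality $u(t_{n,\ast})-u_{n,\ast}^h=(u(t_{n,\ast})-u_{n,\ast})+\eta_{n,\ast}-\phi_{n,\ast}^h$, with the first piece handled by Lemma \ref{lemma:consistError}: that is where the terms $C\nu^{1/2}\max_n(k_n+k_{n-1})^2\Vert\nabla u_{tt}\Vert_{2,0}$ and $C\nu^{1/2}h^{k}\Vert|u|\Vert_{2,k+1}$ in the statement come from, not merely from retaining $\frac{\nu}{2}\sum_n\widehat{k}_n\Vert\nabla\phi_{n,\ast}^h\Vert^2$ on the left.
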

\noindent \textbf{Remark}: The timestep restriction \eqref{eq:timestep-restriction} comes
from discrete Gronwall inequality as it applies to the nonlinearly implicit method.
If a linearly implicit realization for the same method is used, the analysis
can be sharpened to remove the restriction \eqref{eq:timestep-restriction},
as discussed in Ingram \cite{MR3055600}.

\begin{proof}
For $\theta =1$, one-leg DLN method becomes one-leg trapezoid rule and the
conclusions of the theorem have been proved in many places, e.g., Girault and
Raviart \cite{MR548867}.
Now we consider the case $\theta \in \lbrack 0,1)$. Start with NSE at time
$t_{n,\ast }\
\left( 1\leq n\leq M-1\right) $. For any $v^{h}\in V^{h}$, the variational
formulation becomes
\begin{equation*}
(u_{t}\left( t_{n,\ast }\right) ,v^{h})+\nu (\nabla u(t_{n,\ast }),\nabla
v^{h})+b^{\ast }(u(t_{n,\ast }),u(t_{n,\ast }),v^{h})-(p(t_{n,\ast }),\nabla
\cdot v^{h})=(f(t_{n,\ast }),v^{h}).
\end{equation*}%
Equivalently
\begin{gather}
\left( \frac{{\alpha _{2}}{u_{n+1}}+{\alpha _{1}}{u_{n}}+{%
\alpha _{0}}{u_{n-1}}}{\widehat{k}_{n}},v^{h}\right)  +b^{\ast }\left(
u_{n,\ast },u_{n,\ast },v^{h}\right) +\nu \left( \nabla {u_{n,\ast }},\nabla
{v^{h}}\right)   \notag -\left( p_{n,\ast },\nabla \cdot v^{h}\right) \\
= \left( f_{n,\ast
}, v^{h}\right)  +\tau \left( u_{n,\ast },p_{n,\ast },v^{h}\right) ,
\label{eq:NSEVariation}
\end{gather}%
where the truncation error is
\begin{gather*}
\tau \left( u_{n,\ast },p_{n,\ast },v^{h}\right) =\left( \frac{{\alpha _{2}}{%
u_{n+1}}+{\alpha _{1}}{u_{n}}+{\alpha _{0}}{u_{n-1}}}{\widehat{k}_{n}}%
-u_{t}\left( t_{n,\ast }\right) ,v^{h}\right)  \\
+{\nu }\left( \nabla {\left( u_{n,\ast }-u\left( t_{n,\ast }\right) \right) }%
,\nabla {v^{h}}\right) +b^{\ast }\left( u_{n,\ast },u_{n,\ast },v^{h}\right)
-b^{\ast }\left( u\left( t_{n,\ast }\right) ,u\left( t_{n,\ast }\right)
,v^{h}\right)  \\
-\left( p_{n,\ast }-p\left( t_{n,\ast }\right) ,\nabla \cdot v^{h}\right)
+\left( f\left( t_{n,\ast }\right) -f_{n,\ast }\ ,v^{h}\right) .
\end{gather*}%
Define the finite element error $e_{n}:=u_{n}-u_{n}^{h}$ and subtract
\eqref{eq:NSEVariation} from the
one-leg DLN FEM equation \eqref{eq:DLN-NSEinfsup}
\begin{gather}
\left( \frac{{\alpha _{2}}{e_{n+1}}+{\alpha _{1}}{e_{n}}+{\alpha _{0}}{%
e_{n-1}}}{\widehat{k}_{n}},v^{h}\right) + b^{\ast }\left( u_{n,\ast
},u_{n,\ast },v^{h}\right) -b^{\ast }\left( u_{n,\ast }^{h},u_{n,\ast
}^{h},v^{h}\right) +\nu \left( \nabla {e_{n,\ast }},\nabla {v^{h}}\right)
 \notag \\
=\left( p_{n,\ast },\nabla \cdot v^{h}\right)  +  \tau \left( u_{n,\ast
},p_{n,\ast },v^{h}\right) \ \ \ \forall v^{h}\in V^{h}. \label{eq:DLN-NSEVariation}
\end{gather}%
Denote $U_{n}$ to be $L^{2}$ projection of $u_{n}$ onto $V^{h}$ and decompose $e_{n}$
as
\begin{equation*}
e_{n}=u_{n}-U_{n}-\left( u_{n}^{h}-U_{n}\right) :=\eta _{n}-\phi _{n}^{h}.
\end{equation*}%
Setting $v^{h}=\phi _{n,\ast }^{h}$, \eqref{eq:DLN-NSEVariation} writes
\begin{align*}
& \Big(\frac{{\alpha _{2}}{\phi _{n+1}^{h}}+{\alpha _{1}}{\phi _{n}^{h}}+{%
\alpha _{0}}{\phi _{n-1}^{h}}}{\widehat{k}_{n}},\phi _{n,\ast }^{h}\Big)+{%
\nu }\Vert \nabla {\phi _{n,\ast }}\Vert ^{2}+b^{\ast }(u_{n,\ast
}^{h},u_{n,\ast }^{h},\phi _{n,\ast }^{h})-b^{\ast }(u_{n,\ast },u_{n,\ast
},\phi _{n,\ast }^{h}) \\
=&\Big(\frac{{\alpha _{2}}{\eta _{n+1}^{h}}+{\alpha _{1}}{\eta _{n}^{h}}+{%
\alpha _{0}}{\eta _{n-1}^{h}}}{\widehat{k}_{n}},\phi _{n,\ast }^{h}\Big)+\nu
(\nabla {\eta _{n,\ast }},\nabla {\phi _{n,\ast }^{h}})-(p_{n,\ast },\nabla
\cdot \phi _{n,\ast }^{h})-\tau (u_{n,\ast },p_{n,\ast },\phi _{n,\ast
}^{h}).
\end{align*}%
Using $(q^{h},\nabla \cdot \phi _{n,\ast }^{h})=0$ for any $q^{h}\in Q^{h}$
and multiplying the above equation by $\widehat{k}_{n}$, we obtain
\begin{align}
& \Big(\sum_{\ell=0}^{2}{\alpha _{\ell}}{\phi _{n-1+\ell}^{h}}\ ,\phi _{n,\ast }^{h}%
\Big)+\nu \widehat{k}_{n}\Vert \nabla {\phi _{n,\ast }}\Vert ^{2}
\label{eq:erroranalysis2} \\
=&\Big(\sum_{\ell=0}^{2}{\alpha _{\ell}}{\eta _{n-1+\ell}^{h}}\ ,\phi _{n,\ast }^{h}%
\Big)+{\widehat{k}_{n}}b^{\ast }(u_{n,\ast },u_{n,\ast },\phi _{n,\ast
}^{h})-{\widehat{k}_{n}}b^{\ast }(u_{n,\ast }^{h},u_{n,\ast }^{h},\phi
_{n,\ast }^{h}) \notag \\
&+\nu {\widehat{k}_{n}}(\nabla {\eta _{n,\ast }},\nabla {\phi
_{n,\ast }^{h}})
-{\widehat{k}_{n}}(p_{n,\ast }-q^{h},\nabla \cdot \phi _{n,\ast
}^{h})-\widehat{k}_{n}\tau (u_{n,\ast },p_{n,\ast },\phi _{n,\ast
}^{h})\quad \forall q^{h}\in Q^{h}.  \notag
\end{align}

\noindent Then we analyze the terms on the right-hand side of %
\eqref{eq:erroranalysis2}. By the property of projection operators and the linearity
of inner products, we have
\begin{equation*}
({\alpha _{2}}{\eta _{n+1}^{h}}+{\alpha _{1}}{\eta _{n}^{h}}+{\alpha _{0}}{%
\eta _{n-1}^{h}},\phi _{n,\ast }^{h})=0.
\end{equation*}%
Next we apply Lemma \ref{lemma:skew-symmetry}. This yields
\begin{align*}
&{\widehat{k}_{n}}b^{\ast }(u_{n,\ast },u_{n,\ast },\phi _{n,\ast }^{h})-{%
\widehat{k}_{n}}b^{\ast }(u_{n,\ast }^{h},u_{n,\ast }^{h},\phi _{n,\ast
}^{h}) \\
=&{\widehat{k}_{n}}b^{\ast }(u_{n,\ast }-u_{n,\ast }^{h},u_{n,\ast },\phi
_{n,\ast }^{h})+{\widehat{k}_{n}}b^{\ast }(u_{n,\ast }^{h},u_{n,\ast
}-u_{n,\ast }^{h},\phi _{n,\ast }^{h}) \\
=&{\widehat{k}_{n}}b^{\ast }(\eta _{n,\ast },u_{n,\ast },\phi _{n,\ast }^{h})-%
{\widehat{k}_{n}}b^{\ast }(\phi _{n,\ast }^{h},u_{n,\ast },\phi _{n,\ast
}^{h})+{\widehat{k}_{n}}b^{\ast }(u_{n,\ast }^{h},\eta _{n,\ast },\phi
_{n,\ast }^{h}).
\end{align*}%
For any $\varepsilon >0$, using \eqref{eq:conti-triform} and Young's inequality gives
\begin{align*}
{\widehat{k}_{n}}b^{\ast }(\eta _{n,\ast },u_{n,\ast },\phi _{n,\ast
}^{h}) &\leq C(\Omega ){\widehat{k}_{n}}\Vert \eta _{n,\ast }\Vert ^{\frac{1}{2%
}}\Vert \nabla {\eta _{n,\ast }}\Vert ^{\frac{1}{2}}\Vert \nabla {u_{n,\ast }%
}\Vert \Vert \nabla {\phi _{n,\ast }^{h}}\Vert  \\
& \leq \varepsilon \nu \widehat{k}_{n}\Vert \nabla {\phi _{n,\ast }^{h}}%
\Vert ^{2}+C(\varepsilon ,\Omega )\widehat{k}_{n}\nu ^{-1}\Vert \eta
_{n,\ast }\Vert \Vert \nabla \eta _{n,\ast }\Vert \Vert \nabla u_{n,\ast
}\Vert ^{2}, \\
\widehat{k}_{n}b^{\ast }(\phi _{n,\ast }^{h},u_{n,\ast },\phi _{n,\ast
}^{h}) & \leq C(\Omega ){\widehat{k}_{n}}\Vert \phi _{n,\ast }^{h}\Vert ^{\frac{%
1}{2}}\Vert \nabla {\phi _{n,\ast }^{h}}\Vert ^{\frac{1}{2}}\Vert \nabla {%
u_{n,\ast }}\Vert \Vert \nabla {\phi _{n,\ast }^{h}}\Vert  \\
&\leq \varepsilon \nu \widehat{k}_{n}\Vert \nabla \phi _{n,\ast
}^{h}\Vert ^{2}+C(\varepsilon ,\Omega )\widehat{k}_{n}\nu ^{-3}\Vert \phi
_{n,\ast }^{h}\Vert ^{2}\Vert \nabla u_{n,\ast }\Vert ^{4}, \\
{\widehat{k}_{n}}b^{\ast }(u_{n,\ast }^{h},\eta _{n,\ast },\phi _{n,\ast
}^{h}) & \leq C(\Omega ){\widehat{k}_{n}}\Vert u_{n,\ast }^{h}\Vert ^{\frac{1}{2%
}}\Vert \nabla {u_{n,\ast }^{h}}\Vert ^{\frac{1}{2}}\Vert \nabla {\eta
_{n,\ast }}\Vert \Vert \nabla {\phi _{n,\ast }^{h}}\Vert  \\
& \leq \varepsilon \nu \widehat{k}_{n}\Vert \nabla {\phi _{n,\ast }^{h}}%
\Vert ^{2}+C(\varepsilon ,\Omega )\widehat{k}_{n}\nu ^{-1}\Vert u_{n,\ast
}^{h}\Vert \Vert \nabla {u_{n,\ast }^{h}}\Vert \Vert \nabla {\eta _{n,\ast }}%
\Vert ^{2}.
\end{align*}%
Now using the Cauchy-Schwarz and Young inequalities gives
\begin{align*}
\nu \widehat{k}_{n}(\nabla {\eta _{n,\ast }},\nabla {\phi _{n,\ast }^{h}}%
)& \leq \nu \widehat{k}_{n}\Vert \nabla {\eta _{n,\ast }}\Vert \Vert \nabla {%
\phi _{n,\ast }^{h}}\Vert \leq \varepsilon \nu \widehat{k}_{n}\Vert \nabla {%
\phi _{n,\ast }^{h}}\Vert ^{2}+C(\varepsilon )\nu \widehat{k}_{n}\Vert
\nabla {\eta _{n,\ast }}\Vert ^{2}, \\
\widehat{k}_{n}(p_{n,\ast }-q^{h},\nabla \cdot \phi _{n,\ast }^{h}) &\leq
\widehat{k}_{n}\Vert p_{n,\ast }-q^{h}\Vert \Vert \nabla \cdot \phi _{n,\ast
}^{h}\Vert
\leq \sqrt{d}{\widehat{k}_{n}}\Vert p_{n,\ast }-q^{h}\Vert \Vert \nabla {%
\phi _{n,\ast }^{h}}\Vert \\
& \leq \varepsilon \nu \widehat{k}_{n}\Vert \nabla {%
\phi _{n,\ast }^{h}}\Vert ^{2}+C(\varepsilon )\widehat{k}_{n}\nu ^{-1}\Vert
p_{n,\ast }-q^{h}\Vert ^{2},
\end{align*}%
where $d$ is the dimension of the domain $\Omega $. Now set ${\varepsilon }%
=1/10$, combine the analysis above and apply the $G$-stability relation
\eqref{eq:Gstab} to \eqref{eq:erroranalysis2}. This becomes
\begin{align*}
& \begin{Vmatrix}
\phi _{n+1}^{h} \\
\phi _{n}^{h}%
\end{Vmatrix}%
_{G(\theta )}^{2}-%
\begin{Vmatrix}
\phi _{n}^{h} \\
\phi _{n-1}^{h}%
\end{Vmatrix}%
_{G(\theta )}^{2}+\frac{\nu }{2}{\widehat{k}_{n}}\left\Vert \nabla {\phi
_{n,\ast }}\right\Vert ^{2}+\left\Vert \sum_{\ell=0}^{2}a_{\ell}^{(n)}{\phi
_{n-1+\ell}^{h}}\right\Vert ^{2} \\
\leq & C{\frac{\widehat{k}_{n}}{{\nu }^{3}}}\Vert \phi _{n,\ast }^{h}\Vert
^{2}\Vert \nabla {u_{n,\ast }}\Vert ^{4}+C{\nu }{\widehat{k}_{n}}\Vert
\nabla {\eta _{n,\ast }}\Vert ^{2}+C{\frac{{\widehat{k}_{n}}}{\nu }}\Vert
\eta _{n,\ast }\Vert \Vert \nabla {\eta _{n,\ast }}\Vert \Vert \nabla {%
u_{n,\ast }}\Vert ^{2} \\
&+C{\frac{\widehat{k}_{n}}{\nu }}\Vert u_{n,\ast }^{h}\Vert \Vert
\nabla {u_{n,\ast }^{h}}\Vert \Vert \nabla {\eta _{n,\ast }}\Vert ^{2}+C{%
\frac{\widehat{k}_{n}}{\nu }}\Vert p_{n,\ast }-q^{h}\Vert ^{2}+{\widehat{k}%
_{n}}\left\vert \tau \left( u_{n,\ast },p_{n,\ast },\phi _{n,\ast
}^{h}\right) \right\vert .
\end{align*}%
Summing up from $n=1$ to $n=M-1$, we have
\begin{gather}
\begin{Vmatrix}
\phi _{M}^{h} \\
\phi _{M-1}^{h}%
\end{Vmatrix}%
_{G(\theta )}^{2}-%
\begin{Vmatrix}
\phi _{1}^{h} \\
\phi _{0}^{h}%
\end{Vmatrix}%
_{G(\theta )}^{2}+\sum_{n=1}^{M-1}\left\Vert
\sum_{\ell=0}^{2}a_{\ell}^{(n)}{\phi _{n-1+\ell}^{h}}\right\Vert ^{2}+\frac{\nu }{2}%
\sum_{n=1}^{M-1}{\widehat{k}_{n}}\Vert \nabla {\phi _{n,\ast }}\Vert ^{2}
\label{eq:DLNErr-noGnorm} \\
\leq  \sum_{n=1}^{M-1}C{\frac{\widehat{k}_{n}}{{\nu }^{3}}}\Vert \phi
_{n,\ast }^{h}\Vert ^{2}\Vert \nabla {u_{n,\ast }}\Vert
^{4}+\sum_{n=1}^{M-1}C{\nu }{\widehat{k}_{n}}\Vert \nabla {\eta _{n,\ast }}%
\Vert ^{2}+\sum_{n=1}^{M-1}C{\frac{{\widehat{k}_{n}}}{\nu }}\Vert \eta
_{n,\ast }\Vert \Vert \nabla {\eta _{n,\ast }}\Vert \Vert \nabla {u_{n,\ast }%
}\Vert ^{2} \notag \\
+\sum_{n=1}^{M-1}C{\frac{\widehat{k}_{n}}{\nu }}\Vert u_{n,\ast
}^{h}\Vert \Vert \nabla {u_{n,\ast }^{h}}\Vert \Vert \nabla {\eta _{n,\ast }}%
\Vert ^{2}+\sum_{n=1}^{M-1}C{\frac{\widehat{k}_{n}}{\nu }}\Vert p_{n,\ast
}-q^{h}\Vert ^{2}+\sum_{n=1}^{M-1}{\widehat{k}_{n}}\left\vert \tau \left(
u_{n,\ast },p_{n,\ast },\phi _{n,\ast }^{h}\right) \right\vert .  \notag
\end{gather}%
\noindent Set the approximate solution of $u$ at two initial time-steps $t_{0}$
and $t_{1}$ to be $L^{2}$ projection of $u$ into $V^{h}$. We have
\begin{equation*}
\phi _{i}^{h}=u_{i}^{h}-U_{i}=0,\ i=0,1.
\end{equation*}%
\noindent Using the definition of the $G$-norm \eqref{eq:G-norm},
the estimate \eqref{eq:DLNErr-noGnorm} becomes
\begin{gather}
 {\frac{1}{4}}(1+\theta )\Vert \phi _{M}^{h}\Vert ^{2}+{\frac{1}{4}}%
(1-\theta )\Vert \phi _{M-1}^{h}\Vert ^{2}+\sum_{n=1}^{M-1}%
\left\Vert \sum_{\ell=0}^{2}a_{\ell}^{(n)}{\phi _{n-1+\ell}^{h}}\right\Vert ^{2}+%
\frac{\nu }{2}\sum_{n=1}^{M-1}{\widehat{k}_{n}}\Vert \nabla {\phi _{n,\ast }}%
\Vert ^{2} \label{eq:Errpreliminary} \\
\leq  \sum_{n=1}^{M-1}C{\frac{\widehat{k}_{n}}{{\nu }^{3}}}\Vert \phi
_{n,\ast }^{h}\Vert ^{2}\Vert \nabla {u_{n,\ast }}\Vert
^{4}+\sum_{n=1}^{M-1}C{\nu }{\widehat{k}_{n}}\Vert \nabla {\eta _{n,\ast }}%
\Vert ^{2}+\sum_{n=1}^{M-1}C{\frac{{\widehat{k}_{n}}}{\nu }}\Vert \eta
_{n,\ast }\Vert \Vert \nabla {\eta _{n,\ast }}\Vert \Vert \nabla {u_{n,\ast }%
}\Vert ^{2}  \notag \\
 +\sum_{n=1}^{M-1}C{\frac{\widehat{k}_{n}}{\nu }}\Vert u_{n,\ast
}^{h}\Vert \Vert \nabla {u_{n,\ast }^{h}}\Vert \Vert \nabla {\eta _{n,\ast }}%
\Vert ^{2}+\sum_{n=1}^{M-1}C{\frac{\widehat{k}_{n}}{\nu }}\Vert p_{n,\ast
}-q^{h}\Vert ^{2}+\sum_{n=1}^{M-1}{\widehat{k}_{n}}\left\vert \tau \left(
u_{n,\ast },p_{n,\ast },\phi _{n,\ast }^{h}\right) \right\vert .  \notag
\end{gather}%
By the uniform continuity of functions ${\beta _{l}^{(n)}}\left( \varepsilon
_{n},\theta \right) $ ($l=0,1,2$), we have
\begin{gather}  \label{eq:Est-gradeta}
\left\Vert \nabla {\eta _{n,\ast }}\right\Vert =\left\Vert \left( \nabla {%
\sum_{\ell=0}^{2}{\beta _{\ell}^{(n)}}{\eta _{n-1+\ell}}}\right) \right\Vert \leq
\sum_{\ell=0}^{2}\left\vert {\beta _{\ell}^{(n)}}\right\vert \left\Vert \nabla {%
\eta _{n-1+\ell}}\right\Vert \leq C\sum_{\ell=0}^{2}\left\Vert \nabla {\eta
_{n-1+\ell}}\right\Vert .
\end{gather}%
Using the interpolation error estimates \eqref{eq:interpolation-Error},
\eqref{eq:Est-gradeta} yields
\begin{align*}
\sum_{n=1}^{M-1}C{\nu }{\widehat{k}_{n}}\Vert \nabla {\eta _{n,\ast }}%
\Vert ^{2} &\leq C{\nu }\sum_{n=1}^{M-1}{\widehat{k}_{n}}\sum_{\ell=0}^{2}\Vert
\nabla {\eta _{n-1+\ell}}\Vert ^{2}\leq C{\nu }{h^{2k}}\sum_{n=1}^{M-1}{%
\widehat{k}_{n}}\sum_{\ell=0}^{2}\Vert u_{n-1+\ell}\Vert _{k+1}^{2} \notag \\
& \leq C\left( \theta \right) {\nu }{h^{2k}}\sum_{n=1}^{M-1}\left(
k_{n}+k_{n-1}\right) \sum_{\ell=0}^{2}\Vert u_{n-1+\ell}\Vert _{k+1}^{2}, \notag
\end{align*}%
for some constant $C\left( \theta \right) $. Using now Lemma \ref{lemma:disnormEst},
this implies
\begin{gather} \label{eq:EstimateEta}
\sum_{n=1}^{M-1}C{\nu }{\widehat{k}_{n}}\Vert \nabla {\eta _{n,\ast }}\Vert
^{2}\leq C\left( \theta \right) {\nu }{h^{2k}}{\left\Vert \left\vert
u\right\vert \right\Vert _{2,k+1}}^{2}.
\end{gather}%
Using again the uniform continuity of $\{ {\beta _{\ell}^{(n)}}\} _{\ell=0}^{2}$ and
the estimates \eqref{eq:interpolation-Error}, we have
\begin{align*}
\left\Vert \eta _{n,\ast }\right\Vert \left\Vert \nabla {\eta _{n,\ast }}%
\right\Vert & =\left\Vert \sum_{\ell=0}^{2}{\beta _{\ell}^{(n)}}{\eta _{n-1+\ell}}\right\Vert
\left\Vert \nabla {\left( \sum_{\ell=0}^{2}{\beta _{\ell}^{(n)}}{\eta _{n-1+\ell}}\right) }%
\right\Vert  \\
& \leq C\left( \sum_{0\leq i,j\leq 2}\Vert \eta _{n-1+i}\Vert \Vert \nabla {%
\eta _{n-1+j}}\Vert \right) \\
& \leq Ch^{2k+1}\sum_{0\leq i,j\leq 2}\Vert
u_{n-1+i}\Vert _{k+1}\Vert \nabla {u_{n-1+j}}\Vert _{k+1}.
\end{align*}%
Similarly,
\begin{equation*}
\left\Vert \nabla {u_{n,\ast }}\right\Vert ^{2}=\left\Vert \nabla \left(
\sum_{\ell=0}^{2}{\beta _{\ell}^{(n)}}{u_{n-1+\ell}}\right) \right\Vert ^{2}\leq
C\sum_{\ell=0}^{2}\left\Vert \nabla {u_{n-1+\ell}}\right\Vert ^{2}.
\end{equation*}%
Thus by Young's inequality and Lemma \ref{lemma:disnormEst}, we have
\begin{align*}
& \sum_{n=1}^{M-1}C{\frac{{\widehat{k}_{n}}}{\nu }}\Vert \eta _{n,\ast
}\Vert \Vert \nabla {\eta _{n,\ast }}\Vert \Vert \nabla {u_{n,\ast }}\Vert
^{2} \notag \\
\leq & C{\nu }^{-1}h^{2k+1}\sum_{n=1}^{M-1}\widehat{k}_{n}\left( \sum_{0\leq
i,j\leq 2}\Vert u_{n-1+i}\Vert _{k+1}\Vert \nabla {u_{n-1+j}}\Vert
_{k+1}\right) \left( \sum_{\ell=0}^{2}\left\Vert \nabla {u_{n-1+\ell}}\right\Vert
^{2}\right) \notag \\
\leq & C\left( \theta \right) {\nu }^{-1}h^{2k+1}\sum_{n=1}^{M-1}\left(
k_{n}+k_{n-1}\right) \left( \sum_{\ell=0}^{2}\left\Vert u_{n-1+\ell}\right\Vert
_{k+1}^{4}+\sum_{\ell=0}^{2}\left\Vert \nabla u_{n-1+\ell}\right\Vert ^{4}\right)
\notag \\
\leq & C\left( \theta \right) {\nu }^{-1}h^{2k+1}\left( \left\Vert
\left\vert u\right\vert \right\Vert _{4,k+1}^{4}+\left\Vert \left\vert
\nabla {u}\right\vert \right\Vert _{4,0}^{4}\right) . \notag
\end{align*}%
Recall that by Theorem \ref{theorem:DLN-NSEStab}, we have an priori bound for $\left\Vert
u_{n}^{h}\right\Vert $($n=2,3,\cdots M$). Then combine \eqref{eq:interpolation-Error}
and Young's Inequality. This yields
\begin{align*}
& \sum_{n=1}^{M-1}C{\frac{\widehat{k}_{n}}{\nu }}\Vert u_{n,\ast }^{h}\Vert
\Vert \nabla {u_{n,\ast }^{h}}\Vert \Vert \nabla {\eta _{n,\ast }}\Vert
^{2}\leq C{\nu }^{-1}\sum_{n=1}^{M-1}\widehat{k}_{n}\left\Vert \nabla {%
u_{n,\ast }^{h}}\right\Vert \left\Vert \nabla {\eta _{n,\ast }}\right\Vert
^{2} \notag \\
\leq &C\left( \theta \right) {\nu }^{-1}h^{2k}\left( \sum_{n=1}^{M-1}\widehat{%
k}_{n}\left( \sum_{\ell=0}^{2}\left\Vert u_{n-1+\ell}\right\Vert _{k+1}^{4}\right)
+\sum_{n=1}^{M-1}\widehat{k}_{n}\left\Vert \nabla {u_{n,\ast }^{h}}%
\right\Vert ^{2}\right) .  \notag
\end{align*}%
By the $LBB^{h}$ condition, $f\left( t_{n,\ast }\right) $ can be replace by $%
f_{n,\ast }$ in Theorem \ref{theorem:DLN-NSEStab}. Now we apply
Theorem \ref{theorem:DLN-NSEStab} to bound $\widehat{k}_{n}\left\Vert \nabla {u_{n,\ast }^{h}}
\right\Vert ^{2}$, which yields
\begin{equation*}
\sum_{n=1}^{M-1}\widehat{k}_{n}\left\Vert \nabla {u_{n,\ast }^{h}}%
\right\Vert ^{2}\leq \sum_{n=1}^{M-1}{\frac{1}{{\nu }^{2}}}{\widehat{k}_{n}}%
\left\Vert f_{n,\ast }\right\Vert _{\ast }^{2}+\frac{1}{\nu }\left\Vert
u_{1}^{h}\right\Vert ^{2}+\frac{1}{\nu }\left\Vert u_{0}^{h}\right\Vert ^{2}.
\end{equation*}%
Applying Lemma \ref{lemma:disnormEst} again, the above two inequalities imply
\begin{align}
&\sum_{n=1}^{M-1}C\left( \Omega \right) {\frac{\widehat{k}_{n}}{\nu }}\Vert
u_{n,\ast }^{h}\Vert \Vert \nabla {u_{n,\ast }^{h}}\Vert \Vert \nabla {\eta
_{n,\ast }}\Vert ^{2}   \notag \\
\leq &C\left( \theta \right) {\nu }^{-1}h^{2k}\left( \left\Vert \left\vert
u\right\vert \right\Vert _{4,k+1}^{4}+ \frac{1}{\nu^{2}}\left\Vert \left\vert f\right\vert
\right\Vert _{2,\ast }^{2}+\frac{1}{\nu }\left\Vert u_{1}^{h}\right\Vert
^{2}+\frac{1}{\nu }\left\Vert u_{0}^{h}\right\Vert ^{2}\right) . \label{eq:Err-pressureinterpo}
\end{align}%
Using the interpolation error estimate for pressure $p$,
we have
\begin{align}
\sum_{n=1}^{M-1}C{\frac{\widehat{k}_{n}}{\nu }}\left\Vert p_{n,\ast
}-q^{h}\right\Vert ^{2}
& \leq C{\nu }^{-1}\left( \sum_{n=1}^{M-1}\widehat{k}_{n}\left\Vert
p_{n,\ast }-p\left( t_{n,\ast }\right) \right\Vert ^{2}+\sum_{n=1}^{M-1}%
\widehat{k}_{n}\left\Vert p\left( t_{n,\ast }\right) -q^{h}\right\Vert
^{2}\right)   \notag \\
& \leq  C{\nu }^{-1}\left( \sum_{n=1}^{M-1}\widehat{k}_{n}\left\Vert p_{n,\ast }-p\left(
t_{n,\ast }\right) \right\Vert ^{2}+h^{2s+2}\left\Vert \left\vert p_{\ast
}\right\vert \right\Vert _{2,s+1}^{2}\right) , \label{eq:Err-pressure}
\end{align}%
and using the consistency errors Lemma \ref{lemma:consistError} yields
\begin{align}
\sum_{n=1}^{M-1}\widehat{k}_{n}\left\Vert p\left( t_{n,\ast }\right)
-q^{h}\right\Vert ^{2} &\leq C\sum_{n=1}^{M-1}{\widehat{k}_{n}}\left(
k_{n}+k_{n-1}\right) ^{3}\int_{t_{n-1}}^{t_{n+1}}\Vert p_{tt}\Vert ^{2}dt
\notag \\
& \leq C\left( \theta \right) \max_{1\leq n\leq M-1}{\left\{ \left(
k_{n}+k_{n-1}\right) ^{4}\right\} }\Vert p_{tt}\Vert _{2,0}^{2}. \notag
\end{align}%
We combine \eqref{eq:Err-pressureinterpo} and \eqref{eq:Err-pressure} to obtain
\begin{gather} \label{eq:InterpolationPressure}
\sum_{n=1}^{M-1}C{\frac{\widehat{k}_{n}}{\nu }}\left\Vert p_{n,\ast
}-q^{h}\right\Vert ^{2} \\
\leq C\left( \theta \right) {\nu }^{-1}\left( h^{2s+2}{\left\Vert \left\vert
p_{\ast }\right\vert \right\Vert _{2,s+1}}^{2}+\max_{1\leq n\leq M-1}{%
\left\{ \left( k_{n}+k_{n-1}\right) ^{4}\right\} }\Vert p_{tt}\Vert
_{2,0}^{2}\right) .
\end{gather}%
\newline

Let us now treat the truncation error
$\left\vert \tau \left( u_{n,\ast },p_{n,\ast },\phi _{n,\ast
}^{h}\right) \right\vert $. Using the Cauchy-Schwarz inequality, we have
\begin{equation*}
\left( \frac{\sum_{\ell=0}^{2}{\alpha _{\ell}}{u_{n-1+\ell}}}{\widehat{k}_{n}}%
-u_{t}\left( t_{n,\ast }\right) ,\phi _{n,\ast }^{h}\right) \leq {\frac{1}{2}%
}\left\Vert \phi _{n,\ast }^{h}\right\Vert ^{2}+{\frac{1}{2}}\left\Vert
\frac{\sum_{\ell=0}^{2}{\alpha _{\ell}}{u_{n-1+\ell}}}{\widehat{k}_{n}}-u_{t}\left(
t_{n,\ast }\right) \right\Vert ^{2},
\end{equation*}%
and applying again Lemma \ref{lemma:consistError}, for $\theta \in \lbrack 0,1)$
to the last term above
\begin{equation*}
\sum_{n=1}^{M-1}\widehat{k}_{n}\left\Vert \frac{\sum_{\ell=0}^{2}{\alpha _{\ell}}{%
u_{n-1+\ell}}}{\widehat{k}_{n}}-u_{t}\left( t_{n,\ast }\right) \right\Vert
^{2}\leq C\left( \theta \right) \max_{1\leq n\leq M-1}\left\{ \left(
k_{n}+k_{n-1}\right) ^{4}\right\} \Vert u_{ttt}\Vert _{2,0}^{2},
\end{equation*}%
we have
\begin{gather}
\sum_{n=1}^{M-1}\widehat{k}_{n}\left( \frac{\sum_{\ell=0}^{2}{\alpha _{\ell}}{%
u_{n-1+\ell}}}{\widehat{k}_{n}}-u_{t}\left( t_{n,\ast }\right) ,\phi _{n,\ast
}^{h}\right) \notag \\
\leq \frac{1}{2}\sum_{n=1}^{M-1}\widehat{k}_{n}\left\Vert \phi _{n,\ast
}^{h}\right\Vert ^{2}+C\left( \theta \right) \max_{1\leq n\leq M-1}\left\{
\left( k_{n}+k_{n-1}\right) ^{4}\right\} \left\Vert u_{ttt}\right\Vert
_{2,0}^{2}. \notag
\end{gather}%
Similarly,
\begin{equation*}
\sum_{n=1}^{M-1}\widehat{k}_{n}\left( f\left( t_{n,\ast }\right) -f_{n,\ast
}\ ,\phi _{n,\ast }^{h}\right) \leq \frac{1}{2}\sum_{n=1}^{M-1}\widehat{k}%
_{n}\Vert \phi _{n,\ast }^{h}\Vert ^{2}+C\left( \theta \right) \max_{1\leq
n\leq M-1}\left\{ \left( k_{n}+k_{n-1}\right) ^{4}\right\} \Vert f_{tt}\Vert
_{2,0}^{2},
\end{equation*}%
and also
\begin{align*}
&\sum_{n=1}^{M-1}\widehat{k}_{n}\left( p_{n,\ast }-p\left( t_{n,\ast }\right)
,\nabla \cdot \phi _{n,\ast }^{h}\right)  \\
\leq & \varepsilon \nu \sum_{n=1}^{M-1}\widehat{k}_{n}\Vert \nabla {\phi
_{n,\ast }^{h}}\Vert ^{2}+C\left( \varepsilon ,\theta \right) {\nu }%
^{-1}\max_{1\leq n\leq M-1}\left( \left( k_{n}+k_{n-1}\right) ^{4}\right)
\left\Vert p_{tt}\right\Vert _{2,0}^{2}, \\
& \sum_{n=1}^{M-1}{\nu }\widehat{k}_{n}\left( \nabla {\left( u_{n,\ast
}-u\left( t_{n,\ast }\right) \right) },\nabla {\phi _{n,\ast }^{h}}\right)
\\
\leq & \varepsilon \nu \sum_{n=1}^{M-1}\widehat{k}_{n}\Vert \nabla {\phi
_{n,\ast }^{h}}\Vert ^{2}+C\left( \varepsilon ,\theta \right) {\nu }%
\max_{1\leq n\leq M-1}\left\{ \left( k_{n}+k_{n-1}\right) ^{4}\right\}
\left\Vert \nabla {u_{tt}}\right\Vert _{2,0}^{2}.
\end{align*}%
Moreover
\begin{align*}
& b^{\ast }\left( u_{n,\ast },u_{n,\ast },\phi _{n,\ast }^{h}\right)
-b^{\ast }\left( u\left( t_{n,\ast }\right) ,u\left( t_{n,\ast }\right)
,\phi _{n,\ast }^{h}\right)  \\
= &b^{\ast }\left( u_{n,\ast }-u\left( t_{n,\ast }\right) ,u_{n,\ast
},\phi _{n,\ast }^{h}\right) +b^{\ast }\left( u\left( t_{n,\ast }\right)
,u_{n,\ast }-u\left( t_{n,\ast }\right) ,\phi _{n,\ast }^{h}\right)  \\
\leq & C\Vert \nabla \left( u_{n,\ast }-u\left( t_{n,\ast }\right)
\right) \Vert \Vert \nabla {\phi _{n,\ast }^{h}}\Vert \left( \Vert \nabla {%
u_{n,\ast }}\Vert +\Vert \nabla {u\left( t_{n,\ast }\right) }\Vert \right)
\\
\leq & \varepsilon \nu \Vert \nabla {\phi _{n,\ast }^{h}}\Vert
^{2}+C\left( \varepsilon \right) {\nu }^{-1}\Vert \nabla \left( u_{n,\ast
}-u\left( t_{n,\ast }\right) \right) \Vert ^{2}\left( \Vert \nabla {%
u_{n,\ast }}\Vert ^{2}+\Vert \nabla {u\left( t_{n,\ast }\right) }\Vert
^{2}\right) , \\
& \left\Vert \nabla \left( u_{n,\ast }-u\left( t_{n,\ast }\right) \right)
\right\Vert ^{2}\left( \left\Vert \nabla {u_{n,\ast }}\right\Vert
^{2}+\left\Vert \nabla {u\left( t_{n,\ast }\right) }\right\Vert ^{2}\right)
\\
\leq & C\left( \left\Vert \nabla {u_{n,\ast }}\right\Vert
^{2}+\left\Vert \nabla {u\left( t_{n,\ast }\right) }\right\Vert ^{2}\right)
\left( k_{n}+k_{n-1}\right) ^{3}\int_{t_{n-1}}^{t_{n+1}}\left\Vert \nabla {%
u_{tt}}\right\Vert ^{2}dt \\
\leq & C\left( k_{n}+k_{n-1}\right)
^{3}\int_{t_{n-1}}^{t_{n+1}}\left( \left\Vert \nabla {u_{n,\ast }}%
\right\Vert ^{2}+\left\Vert \nabla {u\left( t_{n,\ast }\right) }\right\Vert
^{2}\right) \left\Vert \nabla {u_{tt}}\right\Vert ^{2}dt \\
\leq & C\left( k_{n}+k_{n-1}\right)
^{3}\int_{t_{n-1}}^{t_{n+1}}\left( \left\Vert \nabla {u_{n,\ast }}%
\right\Vert ^{4}+\left\Vert \nabla {u\left( t_{n,\ast }\right) }\right\Vert
^{4}+\left\Vert \nabla {u_{tt}}\right\Vert ^{4}\right) dt \\
\leq & C\left( k_{n}+k_{n-1}\right) ^{4}\left( \Vert \nabla {%
u_{n,\ast }}\Vert ^{4}+\Vert \nabla {u\left( t_{n,\ast }\right) }\Vert
^{4}\right) +C\left( k_{n}+k_{n-1}\right) ^{3}\int_{t_{n-1}}^{t_{n+1}}\Vert
\nabla {u_{tt}}\Vert ^{4}dt.
\end{align*}%
\noindent Now combine Lemma \ref{lemma:disnormEst} and Lemma \ref{lemma:consistError}.
This yields
\begin{align*}
&\sum_{n=1}^{M-1}\widehat{k}_{n}\left( b^{\ast }\left( u_{n,\ast
},u_{n,\ast },\phi _{n,\ast }^{h}\right) -b^{\ast }\left( u\left( t_{n,\ast
}\right) ,u\left( t_{n,\ast }\right) ,\phi _{n,\ast }^{h}\right) \right)  \\
\leq & \varepsilon \nu \sum_{n=1}^{M-1}\widehat{k}_{n}\left\Vert \nabla {%
\phi _{n,\ast }^{h}}\right\Vert ^{2}+\frac{C\left( \varepsilon ,\theta
\right) }{\nu }\max_{1\leq n\leq M-1}\left( k_{n}+k_{n-1}\right)
^{4}\left\Vert \nabla {u_{tt}}\right\Vert _{4,0}^{4} \\
& +\frac{C\left( \varepsilon ,\theta \right) }{\nu }\max_{1\leq n\leq
M-1}\left\{ \left( k_{n}+k_{n-1}\right) ^{4}\right\} \left( \sum_{n=1}^{M-1}%
\widehat{k}_{n}\left\Vert \nabla {u_{n,\ast }}\right\Vert
^{4}+\sum_{n=1}^{M-1}\widehat{k}_{n}\left\Vert \nabla {u\left( t_{n,\ast
}\right) }\right\Vert ^{4}\right)  \\
\leq & \varepsilon \nu \sum_{n=1}^{M-1}\widehat{k}_{n}\left\Vert \nabla {%
\phi _{n,\ast }^{h}}\right\Vert ^{2}+\frac{C\left( \varepsilon ,\theta
\right) }{\nu }\max_{1\leq n\leq M-1}\left( k_{n}+k_{n-1}\right)
^{4}\left\Vert \nabla {u_{tt}}\right\Vert _{4,0}^{4} \\
& +\frac{C\left( \varepsilon ,\theta \right) }{\nu }\max_{1\leq n\leq
M-1}\left\{ \left( k_{n}+k_{n-1}\right) ^{4}\right\} \left( \left\Vert
\left\vert \nabla {u}\right\vert \right\Vert _{4,0}^{4}+\left\Vert
\left\vert \nabla {u_{\ast }}\right\vert \right\Vert _{4,0}^{4}\right) .
\end{align*}%
Setting $\varepsilon =1/12$ and obtain the following estimate for the truncation
error term
\begin{align} \label{eq:EstimateTruncationError}
& \sum_{n=1}^{M-1}{\widehat{k}_{n}}\left\vert \tau \left( u_{n,\ast
},p_{n,\ast },\phi _{n,\ast }^{h}\right) \right\vert
\leq \sum_{n=1}^{M-1}{\widehat{k}_{n}}\Vert \phi _{n,\ast }^{h}\Vert ^{2}+{%
\frac{1}{4}}{\nu }\sum_{n=1}^{M-1}{\widehat{k}_{n}}\Vert \nabla {\phi
_{n,\ast }^{h}}\Vert ^{2} \\
&+C\left( \theta \right) \max_{1\leq n\leq
M-1}\left( k_{n}+k_{n-1}\right) ^{4}\bigg[\Vert u_{ttt}\Vert _{2,0}^{2}
+{\nu}^{-1}\Vert p_{tt}\Vert _{2,0}^{2}
+\Vert f_{tt}\Vert _{2,0}^{2}+\nu \Vert \nabla {u_{tt}}\Vert _{2,0}^{2} \notag \\
&+\frac{1}{\nu}\Vert \nabla {u_{tt}}\Vert _{4,0}^{4}
+\frac{1}{\nu} \Big({\Vert |\nabla {u}%
|\Vert _{4,0}}^{4}+{\Vert |\nabla {u_{\ast }}|\Vert
_{4,0}}^{4}\big)\bigg]. \notag
\end{align}%
\newline

\noindent Now we collect the terms from \eqref{eq:Errpreliminary}, \eqref{eq:EstimateEta},
\eqref{eq:Err-pressureinterpo},
\eqref{eq:InterpolationPressure}, \eqref{eq:EstimateTruncationError} and define
\begin{align*}
& \ \ \ \widetilde{F} \left( h, \max_{1\leq n\leq M-1}\left( k_{n}+k_{n-1}\right)
\right) \\
&=C(\theta) \left( {\nu }h^{2k}\left\Vert \left\vert
u\right\vert \right\Vert _{2,k+1}^{2}+\frac{h^{2k+1}}{\nu}
\left( \left\Vert \left\vert u\right\vert \right\Vert
_{4,k+1}^{4} +\left\Vert \left\vert \nabla {u}\right\vert \right\Vert
_{4,0}^{4}\right) +\frac{h^{2s+2}}{\nu}\Vert |p_{\ast }|\Vert
_{2,s+1}^{2} \right)\\
&+C(\theta )\frac{h^{2k}}{\nu}\Big(\left\Vert \left\vert
u\right\vert \right\Vert _{4,k+1}^{4}+ \frac{1}{{\nu }^{2}}\left\Vert \left\vert
f\right\vert \right\Vert _{2,\ast }^{2}+ \frac{1}{\nu} \Vert u_{1}^{h}\Vert
^{2}+\frac{1}{\nu} \Vert u_{0}^{h}\Vert ^{2}\Big) \\
&+C(\theta )\max_{1\leq n\leq M-1}(k_{n}+k_{n-1})^{4}\Big(\left\Vert
u_{ttt}\right\Vert _{2,0}^{2}+\frac{1}{\nu}\left\Vert p_{tt}\right\Vert
_{2,0}^{2}+\left\Vert f_{tt}\right\Vert _{2,0}^{2}+{\nu }\left\Vert \nabla {%
u_{tt}}\right\Vert _{2,0}^{2} \\
& +\frac{1}{\nu}\left\Vert \nabla {u_{tt}}\right\Vert _{4,0}^{4}+
\frac{1}{\nu}\left\Vert \left\vert \nabla {u}\right\vert \right\Vert _{4,0}^{4}+%
\frac{1}{\nu}\left\Vert \left\vert \nabla {u_{\ast }}\right\vert \right\Vert
_{4,0}^{4}\Big).
\end{align*}%
Thus \eqref{eq:Errpreliminary} becomes
\begin{align} \label{eq:Err-final}
{\frac{1}{4}}\Vert \phi _{M}^{h}\Vert ^{2}+\frac{\nu }{4}\sum_{n=1}^{M-1}{%
\widehat{k}_{n}}\Vert \nabla {\phi _{n,\ast }^{h}}\Vert ^{2}
\leq & \sum_{n=1}^{M-1}\left( C\left( \theta \right) {{\nu }^{-3}}\Vert
\nabla {u_{n,\ast }}\Vert ^{4}+1\right) \widehat{k}_{n}\Vert \phi
_{n,\ast }^{h}\Vert ^{2} \notag \\
& +\widetilde{F}\big(h,\max_{1\leq n\leq M-1}(k_{n}+k_{n-1})\big).
\end{align}%
For convenience, we define the sequence $\{D_{n}\}_{n=1}^{M-1}$
\begin{gather*}
D_{n}: = \left( C\left( \theta \right) {{\nu }^{-3}}\Vert
\nabla {u_{n,\ast }}\Vert ^{4}+1\right) \widehat{k}_{n}, \ \ \ n = 1, \cdots, M-1,
\end{gather*}
and the sequence $\{d_{n}\}_{n=0}^{M}$
\begin{align*}
&d_{0}: = D_{1},\  d_{1}:= D_{1} + D_{2},\  d_{M-1}:= D_{M-2} + D_{M-1},\ d_{M}:= D_{M-1},\\
&d_{n}:= \sum_{\ell=0}^{2} D_{n-1+\ell}(2 \leq n \leq M-2), \ \ \ 2 \leq n \leq M-2.
\end{align*}
We use the triangle inequality in \eqref{eq:Err-final} to obtain
\begin{align*}
\left\Vert \phi _{M}^{h}\right\Vert ^{2}+{\nu }\sum_{n=1}^{M-1}{\widehat{k}%
_{n}}\Vert \nabla {\phi _{n,\ast }}\Vert ^{2} \leq C \left( \theta \right)
\sum_{n=0}^{M}d_{n} \Vert\phi_{n}^{h}
\Vert^{2} +\widetilde{F}\big(h,\max_{1\leq n\leq M-1}(k_{n}+k_{n-1})\big),
\end{align*}
then apply the discrete Gronwall inequality (Lemma \ref{lemma:discrete-Gronwall}) under
the timestep condition \eqref{eq:timestep-restriction}
\begin{gather} \label{eq:Err-disGronwall}
\left\Vert \phi _{M}^{h}\right\Vert ^{2}+{\nu }\sum_{n=1}^{M-1}{\widehat{k}%
_{n}}\Vert \nabla {\phi _{n,\ast }}\Vert ^{2}\leq \exp \left(\sum_{n=1}^{M-1}
C \left(\theta \right) \frac{d_{n}}{1 - d_{n}}
\right)\widetilde{F}\big(h,\max_{1\leq n\leq M-1}(k_{n}+k_{n-1})\big).
\end{gather}%
Define
\begin{align*}
& F\big(h,\max_{1\leq n\leq M-1}(k_{n}+k_{n-1})\big)=C(\theta )\nu ^{\frac{1%
}{2}}h^{k}\Vert |u|\Vert _{2,k+1} \\
& \quad +C\left( \theta \right) {\nu }^{-\frac{1}{2}}h^{k+\frac{1}{2}}\left(
\left\Vert \left\vert u\right\vert \right\vert _{4,k+1}^{2}+\left\Vert
\left\vert \nabla {u}\right\vert \right\vert _{4,0}^{2}\right) +C\left(
\theta \right) {{\nu }^{-\frac{1}{2}}}h^{s+1}\left\Vert \left\vert p_{\ast
}\right\vert \right\Vert _{2,s+1} \\
& \quad +C\left( \theta \right) {\nu }^{-\frac{1}{2}}h^{k}\left( \left\Vert
\left\vert u\right\vert \right\Vert _{4,k+1}^{2}+{\nu }^{-1}\left\Vert
\left\vert f\right\vert \right\Vert _{2,\ast }+{\nu }^{-\frac{1}{2}}\Vert
u_{1}^{h}\Vert +{\nu }^{-\frac{1}{2}}\Vert u_{0}^{h}\Vert \right)  \\
& \quad +C\left( \theta \right) \max_{1\leq n\leq M-1}\{\left(
k_{n}+k_{n-1}\right) ^{2}\}\left( \left\Vert u_{ttt}\right\Vert _{2,0}+\nu
^{-\frac{1}{2}}\left\Vert p_{tt}\right\Vert _{2,0}+\left\Vert
f_{tt}\right\Vert _{2,0}\right.  \\
& \quad \left. +{\nu }^{\frac{1}{2}}\left\Vert \nabla {u_{tt}}\right\Vert
_{2,0}+{\nu }^{-\frac{1}{2}}\left\Vert \nabla {u_{tt}}\right\Vert _{4,0}^{2}+%
{\nu }^{-\frac{1}{2}}\left\Vert \left\vert \nabla {u}\right\vert \right\Vert
_{4,0}^{2}+{\nu }^{-\frac{1}{2}}\left\Vert \left\vert \nabla {u_{\ast }}%
\right\vert \right\Vert _{4,0}^{2}\right) .
\end{align*}%
Then from \eqref{eq:Err-disGronwall} we have
\begin{gather}  \label{eq:Err-finaldisGronwall}
\left\Vert \phi _{M}^{h}\right\Vert \leq F\left( h,\max_{1\leq n\leq
M-1}\left( k_{n}+k_{n-1}\right) \right) .
\end{gather}%
Combining \eqref{eq:interpolation-Error} and \eqref{eq:Err-finaldisGronwall}
yields
\begin{align*}
\left\Vert \left\vert u-u^{h}\right\vert \right\Vert _{\infty
,0}&:=\max_{0\leq n\leq M}\left\Vert u_{n}-u_{n}^{h}\right\Vert \leq
\max_{0\leq n\leq M}\left\Vert \eta _{n}\right\Vert +\max_{0\leq n\leq
M}\left\Vert \phi _{n}\right\Vert  \\
& \leq \max_{0\leq n\leq M}Ch^{k+1}\left\Vert u_{n}\right\Vert
_{k+1}+F\left( h,\max_{1\leq n\leq M-1}\left( k_{n}+k_{n-1}\right) \right)
\\
& =Ch^{k+1}\left\Vert \left\vert u\right\vert \right\Vert _{\infty
,k+1}+F\left( h,\max_{1\leq n\leq M-1}\left( k_{n}+k_{n-1}\right) \right),
\end{align*}%
where
\begin{equation*}
F\left( h,\max_{1\leq n\leq M-1}\left( k_{n}+k_{n-1}\right) \right) =%
\mathcal{O}\left( h^{k}+h^{s+1}+\max_{1\leq n\leq M-1}\left( k_{n}+k_{n-1}\right)
^{2}\right) .
\end{equation*}%
This concludes the proof of the first part of the theorem. \\

\noindent For second part, we have
\begin{equation*}
\sum_{n=1}^{M-1}\widehat{k}_{n}\left\Vert \nabla \left( u\left(
t_{n,\ast }\right) -u_{n,\ast }^{h}\right) \right\Vert ^{2}\leq
\sum_{n=1}^{M-1}\widehat{k}_{n}\left\Vert \nabla \left( u\left( t_{n,\ast
}\right) -u_{n,\ast }\right) \right\Vert ^{2}+\sum_{n=1}^{M-1}\widehat{%
k}_{n}\left\Vert \nabla \left( u_{n,\ast }^{h}-u_{n,\ast }\right)
\right\Vert ^{2}.
\end{equation*}%
We apply Lemma \ref{lemma:consistError} to the first term in the right hand side
\begin{equation*}
{\nu }\sum_{n=1}^{M-1}\widehat{k}_{n}\left\Vert \nabla \left( u\left(
t_{n,\ast }\right) -u_{n,\ast }\right) \right\Vert ^{2}\leq C\left( \theta
\right) {\nu }\max_{1\leq n\leq M-1}\{\left( k_{n}+k_{n-1}\right)
^{4}\}\left\Vert \nabla {u_{tt}}\right\Vert _{2,0}^{2}\ ,
\end{equation*}%
and use the triangle inequality for the second term
\begin{equation*}
{\nu }\sum_{n=1}^{M-1}\widehat{k}_{n}\left\Vert \nabla \left( u_{n,\ast
}^{h}-u_{n,\ast }\right) \right\Vert ^{2}\leq C{\nu }\sum_{n=1}^{M-1}%
\widehat{k}_{n}\left\Vert \nabla \eta _{n,\ast }\right\Vert ^{2}+C{\nu }%
\sum_{n=1}^{M-1}\widehat{k}_{n}\left\Vert \nabla \phi _{n,\ast }\right\Vert
^{2}\ .
\end{equation*}%
The last term inhere can be bound by \eqref{eq:Err-disGronwall}, while for the
first term, we use \eqref{eq:interpolation-Error} and Lemma \ref{lemma:disnormEst}
\begin{align*}
& C{\nu }\sum_{n=1}^{M-1}\widehat{k}_{n}\left\Vert \nabla \eta _{n,\ast
}\right\Vert ^{2}\leq C\left( \theta \right){\nu} \sum_{n=1}^{M-1}\left(
k_{n}+k_{n-1}\right) \left( \sum_{\ell=0}^{2}\left\Vert \nabla {\eta _{n-1+\ell}}%
\right\Vert ^{2}\right)  \\
& \leq C\left( \theta \right){\nu} h^{2k}\sum_{n=1}^{M-1}\left(
k_{n}+k_{n-1}\right) \left( \sum_{\ell=0}^{2}\left\Vert u_{n-1+\ell}\right\Vert
_{k+1}^{2}\right) \leq C\left( \theta \right) h^{2k}\left\Vert \left\vert
u\right\vert \right\Vert _{2,k+1}^{2}.
\end{align*}%
Combining the above estimates, we have
\begin{equation*}
{\nu }\sum_{n=1}^{M-1}\widehat{k}_{n}\left\Vert \nabla \left( u_{n,\ast
}^{h}-u_{n,\ast }\right) \right\Vert ^{2}\leq C\left( \theta \right){\nu}
h^{2k}\left\Vert \left\vert u\right\vert \right\Vert _{2,k+1}^{2}+C\left(
\theta \right) \widetilde{F}\left( h,\max_{1\leq n\leq M-1}\left(
k_{n}+k_{n-1}\right) \right) .
\end{equation*}%
Finally
\begin{gather}
\left( {\nu }\sum_{n=1}^{M-1}\widehat{k}_{n}\left\Vert \nabla \left(
u\left( t_{n,\ast }\right) -u_{n,\ast }^{h}\right) \right\Vert ^{2}\right) ^{%
\frac{1}{2}} \leq C{\nu }^{\frac{1}{2}}\max_{1\leq n\leq M-1}\{\left(
k_{n}+k_{n-1}\right) ^{2}\}\Vert \nabla {u_{tt}}\Vert _{2,0} \notag \\
+C{\nu }^{\frac{1}{2}}h^{k}\left\Vert \left\vert u\right\vert
\right\Vert _{2,k+1}+F\left( h,\max_{1\leq n\leq M-1}\left(
k_{n}+k_{n-1}\right) \right) , \notag
\end{gather}%
which concludes the proof of second part of the theorem.
\end{proof}

\section{Numerical Tests}

\label{section:numericaltests}

In this section, FreeFem++ is used for numerical tests with Taylor-Hood $%
(P2-P1)$ finite elements. We verify the second-order convergence and stability
of the DLN algorithm with variable time steps through three numerical experiments.

\subsection{Convergence Test (constant timestep size)}

\ The second order convergence of DLN algorithm is verified on the Taylor-Green
benchmark problem, Dyke \cite{VD}. In the domain $\Omega=(0,1)\times(0,1)$, the true
solution is
\begin{align*}
& u_{1}(x,y,t)=-\cos(w\pi x)\sin(w \pi y)\exp(-2w^{2}\pi^{2}t/\tau), \\
& u_{2}(x,y,t)=\sin(w\pi x)\cos(w \pi y)\exp(-2w^{2}\pi^{2}t/\tau), \\
& p(x,y,t)=-\frac{1}{4}(\cos(2w\pi x)+\cos(2w\pi
y))\exp(-4w^{2}\pi^{2}t/\tau),
\end{align*}
and we take the final time $T=1$, $w=1$ and $\tau=Re=100$. The body force $%
f$, initial condition, and boundary condition are determined by the true
solution. Setting $\Delta t=h$ to calculate the convergence order $R$ by the
error $e$ at two successive values of $\Delta t$ via
\begin{equation*}
R=\ln(e(\Delta t_{1})/e(\Delta t_{2}))/\ln(\Delta t_{1}/\Delta t_{2}).
\end{equation*}

\begin{table}[ptb]
\caption{The errors and convergence order of the DLN scheme at time $T=1$
for the velocity and pressure of $L^{2}$-norm with $\protect\theta=0.2$.}
\label{table:1}\centering
\begin{tabular}{clclclc}
\hline
$h=\Delta t$ & $\||e_{u}|\| _{2,0}$ & $R$ & $\||\nabla e_{u}|\|_{2,0}$
& $R$ & $\||e_{p}|\|_{2,0}$ & $R$ \\ \hline
$\frac{1}{16}$ & 0.000740428 & - & 0.0610604 & - & 0.00169375 & - \\
$\frac{1}{24}$ & 0.000228828 & 2.89 & 0.0271831 & 1.99 & 0.000687042 & 2.23
\\
$\frac{1}{32}$ & 8.89412e-05 & 3.28 & 0.0141961 & 2.26 & 0.000359889 & 2.25
\\
$\frac{1}{40}$ & 4.65027e-05 & 2.91 & 0.00912596 & 1.98 & 0.000220769 & 2.19
\\
$\frac{1}{48}$ & 2.86044e-05 & 2.67 & 0.00654533 & 1.82 & 0.000152877 & 2.02
\\
$\frac{1}{56}$ & 1.67658e-05 & 3.46 & 0.00452741 & 2.39 & 0.000107064 & 2.31
\\ \hline
\end{tabular}
\newline
\end{table}

\begin{table}[ptb]
\caption{The errors and convergence order of the DLN scheme at time $T=1$
for the velocity and pressure of $L^{\infty}$-norm with $\protect\theta=0.2$.
}
\label{table:2}\centering
\begin{tabular}{clclclc}
\hline
$h=\Delta t$ & $\||e_{u}|\|_{\infty}$ & $R$ & $\||\nabla e_{u}|\|_{\infty}$
& $R$ & $\||e_{p}|\|_{\infty}$ & $R$ \\ \hline
$\frac{1}{16}$ & 0.00122596 & - & 0.101825 & - & 0.00254809 & - \\
$\frac{1}{24}$ & 0.000399952 & 2.76 & 0.047497 & 1.88 & 0.00113562 & 1.99 \\
$\frac{1}{32}$ & 0.000162022 & 3.14 & 0.025876 & 2.11 & 0.000638476 & 2.00
\\
$\frac{1}{40}$ & 8.71029e-05 & 2.78 & 0.017116 & 1.85 & 0.000408904 & 1.99
\\
$\frac{1}{48}$ & 5.43775e-05 & 2.58 & 0.0125455 & 1.70 & 0.000291014 & 1.86
\\
$\frac{1}{56}$ & 3.24237e-05 & 3.35 & 0.00883734 & 2.27 & 0.000210233 & 2.11
\\ \hline
\end{tabular}
\newline
\end{table}

\begin{table}[ptb]
\caption{The errors and convergence order of the DLN scheme at time $T=1$
for the velocity and pressure of $L^{2}$-norm with $\protect\theta=0.5$.}
\label{table:3}\centering
\begin{tabular}{clclclc}
\hline
$h=\Delta t$ & $\||e_{u}|\|_{2,0}$ & $R$ & $\||\nabla e_{u}|\|_{2,0}$ & $R$
& $\||e_{p}|\|_{2,0}$ & $R$ \\ \hline
$\frac{1}{16}$ & 0.000700594 & - & 0.0570129 & - & 0.00134003 & - \\
$\frac{1}{24}$ & 0.000217831 & 2.88 & 0.0255791 & 1.98 & 0.000560912 & 2.11
\\
$\frac{1}{32}$ & 8.53722e-05 & 3.26 & 0.0135313 & 2.21 & 0.000305539 & 2.16
\\
$\frac{1}{40}$ & 4.50219e-05 & 2.87 & 0.00879805 & 1.93 & 0.000191838 & 2.08
\\
$\frac{1}{48}$ & 2.78268e-05 & 2.64 & 0.00634477 & 1.79 & 0.000135402 & 1.91
\\
$\frac{1}{56}$ & 1.63621e-05 & 3.44 & 0.00440779 & 2.36 & 9.57885e-05 & 2.24
\\ \hline
\end{tabular}
\newline
\end{table}

\begin{table}[ptb]
\caption{The errors and convergence order of the DLN scheme at time $T=1$
for the velocity and pressure of $L^{\infty}$-norm with $\protect\theta=0.5$.
}
\label{table:4}\centering
\begin{tabular}{clclclc}
\hline
$h=\Delta t$ & $\||e_{u}|\|_{\infty}$ & $R$ & $\||\nabla e_{u}|\|_{\infty}$
& $R$ & $\||e_{p}|\|_{\infty}$ & $R$ \\ \hline
$\frac{1}{16}$ & 0.00110053 & - & 0.0898315 & - & 0.00236018 & - \\
$\frac{1}{24}$ & 0.000354163 & 2.79 & 0.0434666 & 1.79 & 0.00105671 & 1.98
\\
$\frac{1}{32}$ & 0.000147375 & 3.05 & 0.0241532 & 2.04 & 0.000595252 & 1.99
\\
$\frac{1}{40}$ & 8.04838e-05 & 2.71 & 0.0160898 & 1.82 & 0.000381558 & 1.99
\\
$\frac{1}{48}$ & 5.0769e-05 & 2.53 & 0.011827 & 1.69 & 0.000271851 & 1.86 \\
$\frac{1}{56}$ & 3.04708e-05 & 3.31 & 0.00835234 & 2.26 & 0.000196439 & 2.11
\\ \hline
\end{tabular}
\newline
\end{table}

\begin{table}[ptb]
\caption{The errors and convergence order of the DLN scheme at time $T=1$
for the velocity and pressure of $L^{2}$-norm with $\protect\theta=0.7$.}
\label{table:5}\centering
\begin{tabular}{clclclc}
\hline
$h=\Delta t$ & $\||e_{u}|\|_{2,0}$ & $R$ & $\||\nabla e_{u}|\|_{2,0}$ & $R$
& $\||e_{p}|\|_{2,0}$ & $R$ \\ \hline
$\frac{1}{16}$ & 0.000689478 & - & 0.0560293 & - & 0.00127634 & - \\
$\frac{1}{24}$ & 0.000215154 & 2.87 & 0.025242 & 1.97 & 0.000549689 & 2.08
\\
$\frac{1}{32}$ & 8.45301e-05 & 3.25 & 0.0133912 & 2.20 & 0.000296992 & 2.14
\\
$\frac{1}{40}$ & 4.46583e-05 & 2.86 & 0.00872444 & 1.92 & 0.000187373 & 2.06
\\
$\frac{1}{48}$ & 2.76364e-05 & 2.63 & 0.00629981 & 1.79 & 0.000132745 & 1.89
\\
$\frac{1}{56}$ & 1.62635e-05 & 3.44 & 0.00438056 & 2.36 & 9.40928e-05 & 2.23
\\ \hline
\end{tabular}
\newline
\end{table}

\begin{table}[ptb]
\caption{The errors and convergence order of the DLN scheme at time $T=1$
for the velocity and pressure of $L^{\infty}$-norm with $\protect\theta=0.7$.
}
\label{table:6}\centering
\begin{tabular}{clclclc}
\hline
$h=\Delta t$ & $\||e_{u}|\|_{\infty}$ & $R$ & $\||\nabla e_{u}|\|_{\infty}$
& $R$ & $\||e_{p}|\|_{\infty}$ & $R$ \\ \hline
$\frac{1}{16}$ & 0.00101829 & - & 0.0878696 & - & 0.00241273 & - \\
$\frac{1}{24}$ & 0.000349287 & 2.64 & 0.0431141 & 1.76 & 0.00108285 & 1.98
\\
$\frac{1}{32}$ & 0.000146272 & 3.03 & 0.0240831 & 2.02 & 0.000611728 & 1.99
\\
$\frac{1}{40}$ & 8.01746e-05 & 2.69 & 0.0160849 & 1.81 & 0.000392496 & 1.99
\\
$\frac{1}{48}$ & 5.06795e-05 & 2.52 & 0.0118461 & 1.68 & 0.000279776 & 1.86
\\
$\frac{1}{56}$ & 3.05001e-05 & 3.29 & 0.00838398 & 2.24 & 0.000202406 & 2.10
\\ \hline
\end{tabular}
\newline
\end{table}

Tables \ref{table:1}, \ref{table:2}, Tables \ref{table:3}, \ref{table:4} and
Tables \ref{table:5}, \ref{table:6} correspond to $\theta=0.2, 0.5,0.7$,
respectively. The results fully verify that our DLN algorithm has
second-order convergence for both velocity and pressure, and it can be seen
that the convergence of velocity is better.

\subsection{2D Offset Circles Problem (with preset variable timestep size)}

This is a test problem from Jiang \cite{MR3260478} that is inspired by flow
between offset cylinders. The domain is a disk with a smaller off center
obstacle inside. 
Let $\Omega_{1}=\{(x,y):x^{2}+y^{2}\leq1\}$ and $\Omega_{2}=\{(x,y):(x-%
\frac {1}{2})^{2}+y^{2}\geq0.01\}$. The flow is driven by a rotational body
force:
\begin{align*}
f(x,y,t)=(-4y(1-x^{2}-y^{2}),4x(1-x^{2}-y^{2}))^{T}.
\end{align*}
with no-slip boundary conditions imposed on both circles. The body
force $f=0$ on the outer circle. The flow rotates about $(0,0)$ and the
inner circle induces a von K\'arm\'an vortex street which re-interacts
with the immersed circle creating more complex structures. Figure \ref{fig1}
and Figure \ref{fig2} show this situation.

For this test, we set $Re=200$, the number of mesh points around
the inner circle $i$ and the mesh points around the outer circle $o$
to be $10$ and $40$ respectively. The
parameter $\theta=0.5$ in DLN scheme, for the variable timestep size, the
number of computations is $n=1000$. We let the timestep size changes as the
function used in Chen and Mclaughlin \cite{MR3962839} to test stability a of different method:

\begin{align*}
k_{n}=%
\begin{cases}
0.05 & \text{$0\leq n\leq10$}, \\
0.05+0.002\sin(10t_{n}) & \text{$n>10$}.%
\end{cases}%
\end{align*}

For comparison, we also solve this problem with a standard (Variable step)
BDF2 time discretization.We calculate the energy $\frac{1}{2}\Vert u\Vert^{2}$
using BDF2 and DLN algorithms respectively. Here, let the number of mesh points
on boundary of outside circle and inner circle be $o=160$ and $i=40$
respectively and timestep $k_{0}=0.05$ and $k_{n}=k_{n-1}+0.001$.
Figure \ref{fig3a} shows that when timestep $k_{n}$ increases with time
$t$, BDF2 and DLN algorithms are respectively used to calculate energy
and in Figure \ref{fig3b}, \textit{energy of BDF2} increases with increasing
timestep, while the \textit{energy of the approximation by DLN} remains almost
constant. This verifies that the DLN  algorithm has greater stability.

\begin{figure}[ptbh]
\centering
\par
\subfigure[]{
\begin{minipage}[t]{0.33\linewidth}
\centering
\includegraphics[width=1.651in]{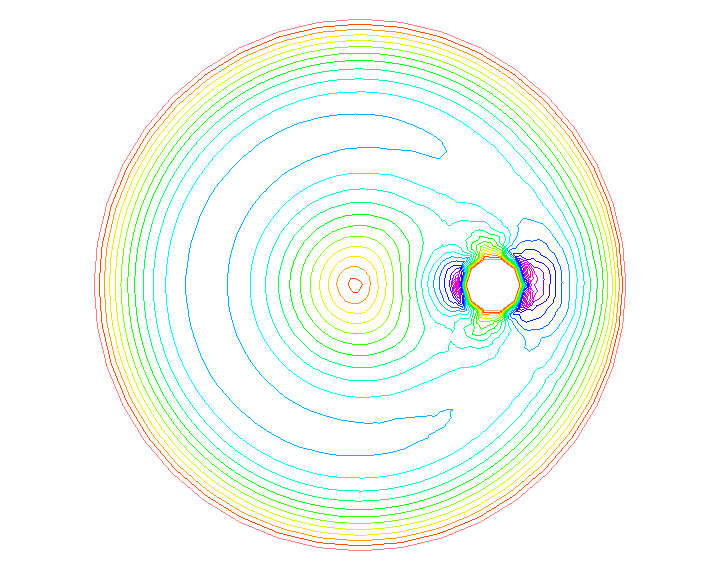}\\
\vspace{0.02cm}
\end{minipage}}%
\subfigure[]{
\begin{minipage}[t]{0.33\linewidth}
\centering
\includegraphics[width=1.651in]{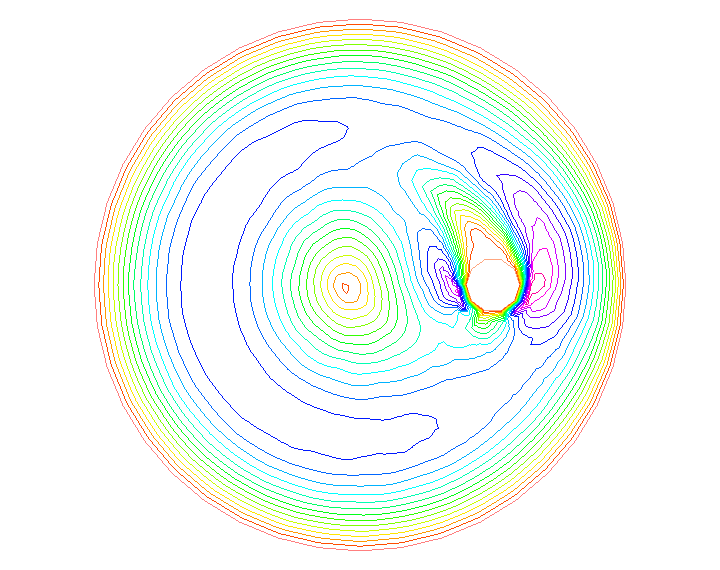}\\
\vspace{0.02cm}
\end{minipage}}\newline
\subfigure[]{
\begin{minipage}[t]{0.33\linewidth}
\centering
\includegraphics[width=1.651in]{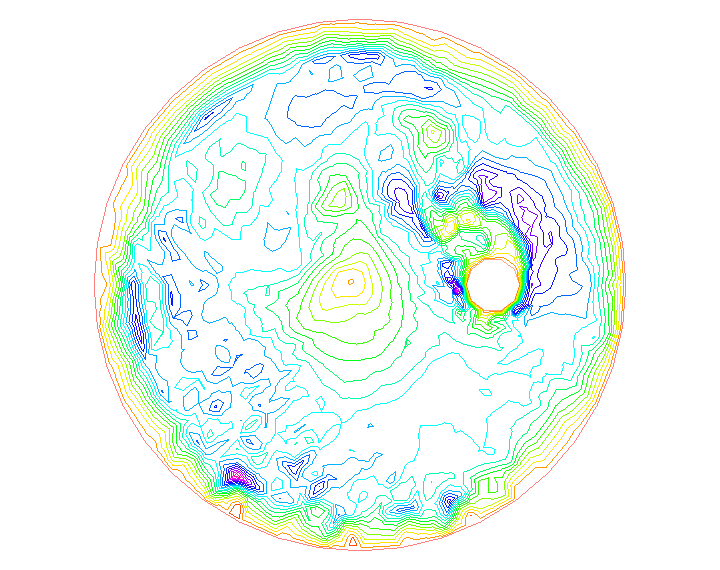}\\
\vspace{0.02cm}
\end{minipage}}%
\subfigure[]{
\begin{minipage}[t]{0.33\linewidth}
\centering
\includegraphics[width=1.651in]{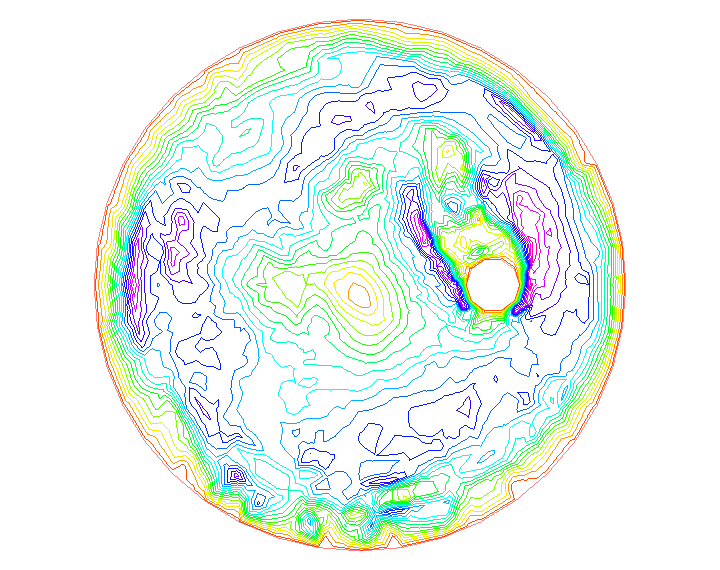}\\
\vspace{0.02cm}
\end{minipage}}
\par
\centering
\vspace{-0.2cm}
\caption{Spreed Contours of DLN. }
\label{fig1}
\end{figure}

\begin{figure}[ptbh]
\centering
\par
\subfigure[]{
\begin{minipage}[t]{0.33\linewidth}
\centering
\includegraphics[width=1.651in]{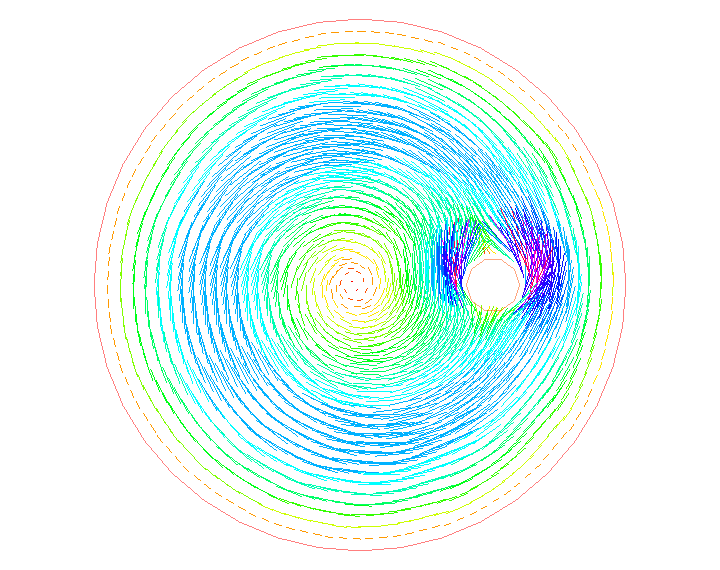}\\
\vspace{0.02cm}
\end{minipage}}%
\subfigure[]{
\begin{minipage}[t]{0.33\linewidth}
\centering
\includegraphics[width=1.651in]{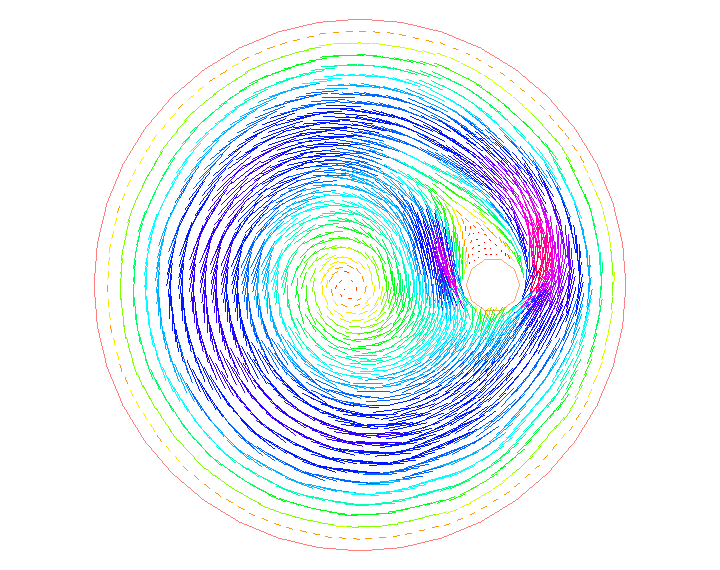}\\
\vspace{0.02cm}
\end{minipage}}\newline
\subfigure[]{
\begin{minipage}[t]{0.33\linewidth}
\centering
\includegraphics[width=1.651in]{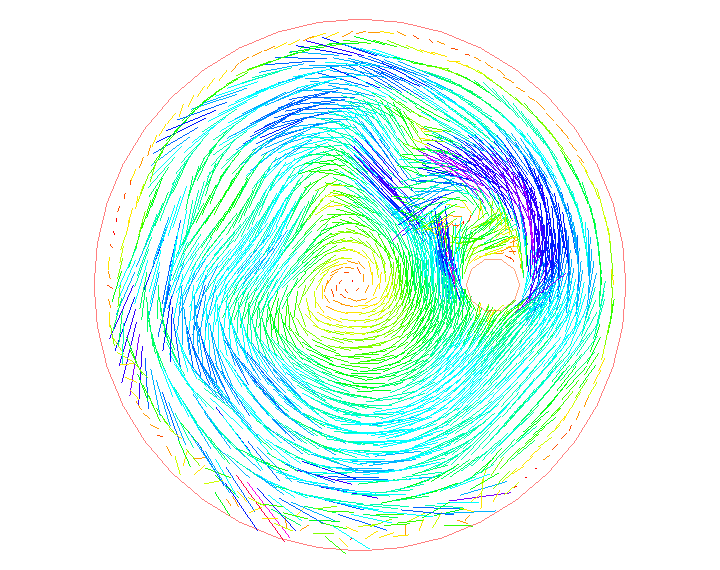}\\
\vspace{0.02cm}
\end{minipage}}%
\subfigure[]{
\begin{minipage}[t]{0.33\linewidth}
\centering
\includegraphics[width=1.651in]{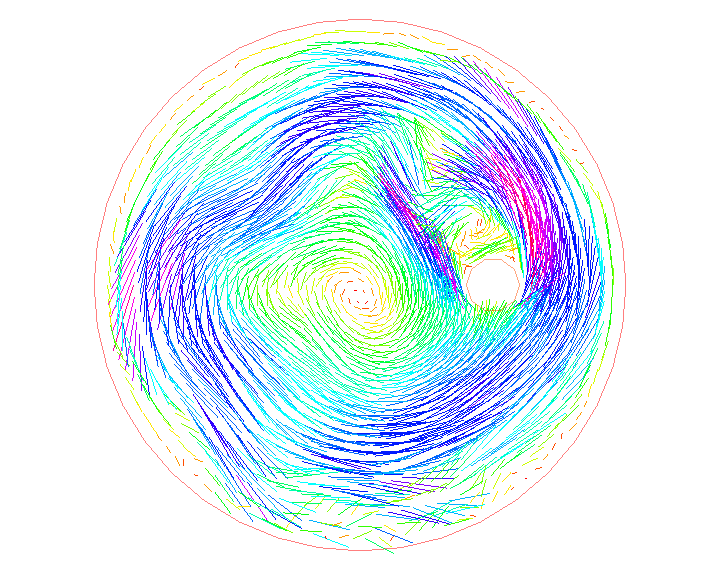}\\
\vspace{0.02cm}
\end{minipage}}
\par
\centering
\vspace{-0.2cm}
\caption{Velocity Streamlines of DLN. }
\label{fig2}
\end{figure}

\begin{figure}[ptbh]
\centering
\par
\subfigure[]{       \label{fig3a}
		\begin{minipage}[t]{0.45\linewidth}
			\centering
			\includegraphics[width=2.3in,height=1.8in]{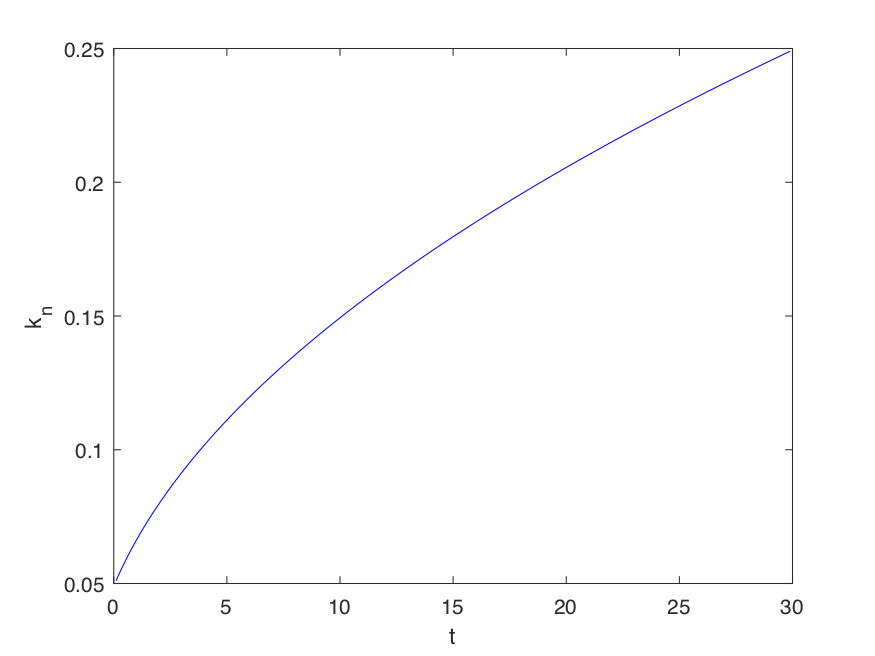}\\
			\vspace{0.02cm}
					\end{minipage}	}
\subfigure[]{       \label{fig3b}
		\begin{minipage}[t]{0.45\linewidth}
			\centering
			\includegraphics[width=2.3in,height=1.8in]{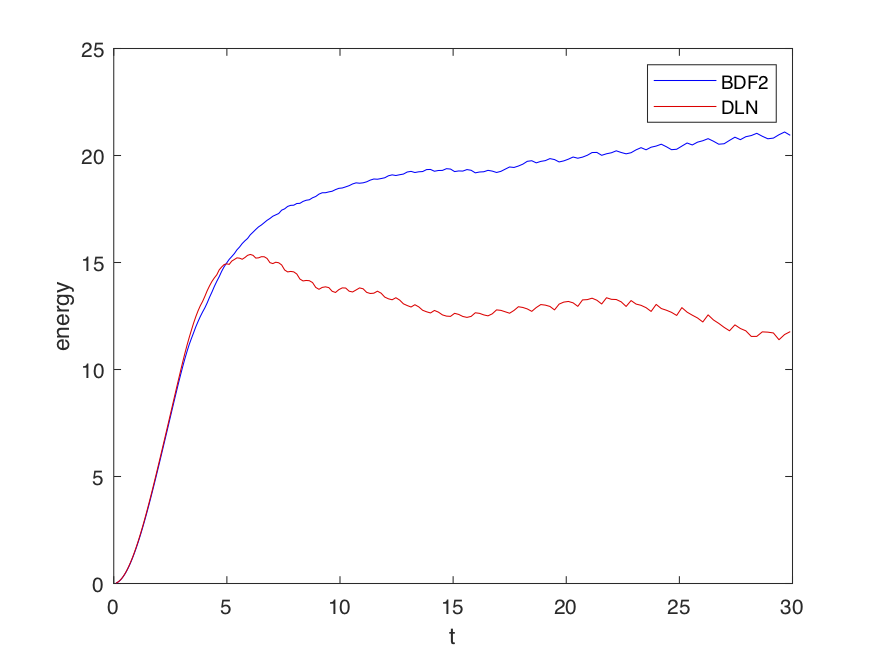}\\
			\vspace{0.02cm}
					\end{minipage}	} \centering
\vspace{-0.2cm}
\caption{Energy of DLN and BDF2 with variable timestep. }
\label{fig3}
\end{figure}

\subsection{Adapting the timestep}

Finally we use this example to perform a simple adaptivity experiment. For
this test, we adapt the timestep using the minimum dissipation criteria of
Capuano, Sanderse, De Angelis and Coppola \cite{Capuano2017AMT}. Our goal is to
test if adapting the timestep produces a significant difference in the solution.
Other criteria/estimators are under study. Their idea is to adapt the timestep
to keep the numerical dissipation, $\epsilon^{DLN}$ from the dominating physical
dissipation, $\epsilon^{\nu}$. Thus we adapt for
\begin{align*}
\chi=\left| \frac{\epsilon^{DLN}}{\epsilon^{\nu}} \right|<\delta.
\end{align*}
Here $\epsilon^{DLN}$ is the numerical dissipation and
$\epsilon^{\nu}$ is the viscous dissipation. These are given by:
\begin{align*}
&\epsilon^{DLN}=\left\|\frac{\sum_{\ell=0}^{2}a_{\ell}^{n}u_{n-1+\ell}^h}
{\sqrt{\hat{k_{n}}}} \right\|^2,\\
&\epsilon^{\nu}=\nu \left\|\nabla u_{n,\ast}^h \right\|^2.
\end{align*}
In the test, we set the tolerance for the dissipation ratio $\delta$ to be $0.002$.
The time stepsize is then adapted by halving or doubling according to
\begin{align*}
&\Delta t^{n+1}=2*\Delta t^n;\quad if \ \chi <\delta, \\
&\Delta t^{n}=0.5*\Delta t^n;\qquad if\  \chi \geq\delta.
\end{align*}
We adapted the next timestep when the dissipation ratio was out of range.
Naturally, other strategies for varying $\Delta t$ could be tested, such as
formula (16) p.2317 of Capuano, Sanderse, De Angelis and Coppola
\cite{Capuano2017AMT}. We select the final time $T = 63.7$ and minimal time stepsize to be
$0.01$. The adaptive algorithm completed in $6000$ steps. Figure \ref{fig4} and
Figure \ref{fig5} are line diagrams of time stepsize $k_{n}$, energy
$\frac{1}{2} \left\|u \right\|^{2}$, numerical dissipation $\sqrt{\epsilon^{DLN}}$
and ratio $\chi$ changing with time $T$, respectively.

Then we select the same final time $T = 63.7$, the same calculated steps $6000$
and use the constant time stepstep $k = T/6000$ to calculate to obtain the line
diagram of energy $\frac{1}{2} \left\|u \right\|^{2}$, numerical dissipation
$\sqrt{\epsilon^{DLN}}$ and ratio $\chi$ changing with time $T$, See Figure \ref{fig6}
and Figure \ref{fig7}.

\begin{figure}[ptbh]
\centering
\par
\subfigure[]{    \label{fig4a}
		\begin{minipage}[t]{0.48\linewidth}
			\centering
			\includegraphics[width=2.5in,height=1.8in]{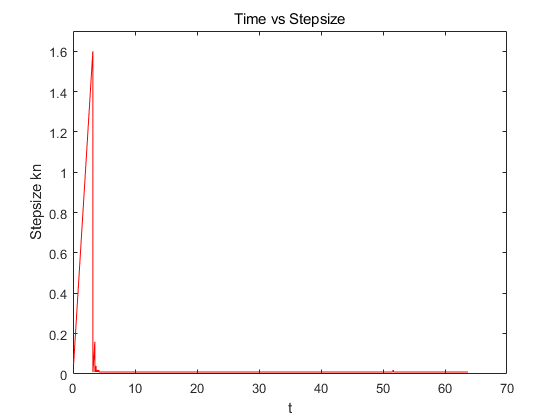}\\
			\vspace{0.02cm}
					\end{minipage}	}
\subfigure[]{    \label{fig4b}
		\begin{minipage}[t]{0.48\linewidth}
			\centering
			\includegraphics[width=2.5in,height=1.8in]{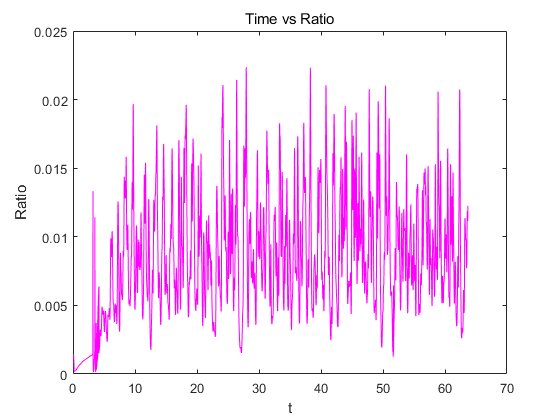}\\
			\vspace{0.02cm}
					\end{minipage}	} \centering
\vspace{-0.2cm}
\caption{The time stepsize $k_{n}$ and ratio $\chi$ changing with
adaptive time stepsize. }
\label{fig4}
\end{figure}

\begin{figure}[ptbh]
\centering
\par
\subfigure[]{       \label{fig5a}
		\begin{minipage}[t]{0.48\linewidth}
			\centering
			\includegraphics[width=2.5in,height=1.8in]{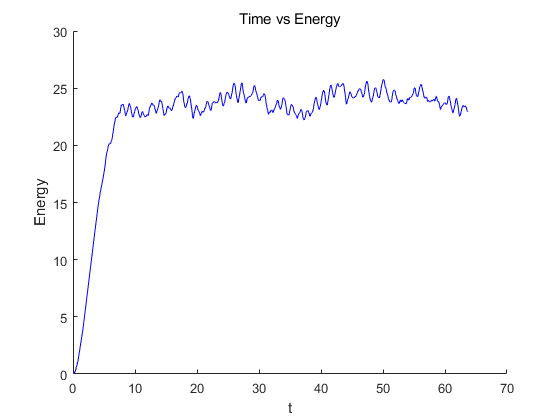}\\
			\vspace{0.02cm}
					\end{minipage}	}
\subfigure[]{       \label{fig5b}
		\begin{minipage}[t]{0.48\linewidth}
			\centering
			\includegraphics[width=2.5in,height=1.8in]{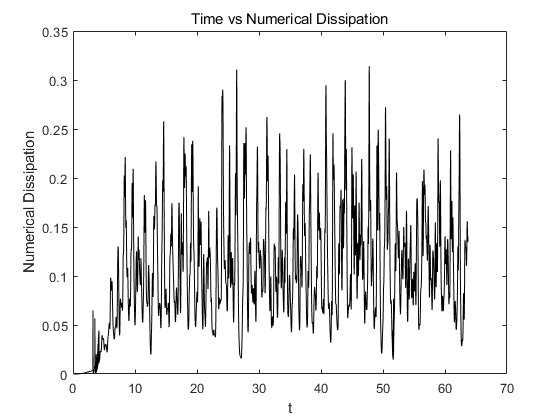}\\
			\vspace{0.02cm}
					\end{minipage}	} \centering
\vspace{-0.2cm}
\caption{The energy $\frac{1}{2} \left\| u \right\|^{2}$ and numerical
dissipation $\sqrt{\epsilon^{DLN}}$ changing with adaptive time stepsize. }
\label{fig5}
\end{figure}

\begin{figure}[ptbh]
\centering
\par
\subfigure[]{       \label{fig6a}
		\begin{minipage}[t]{0.48\linewidth}
			\centering
			\includegraphics[width=2.5in,height=1.8in]{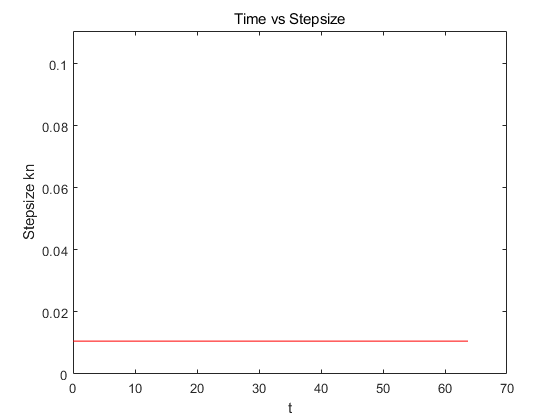}\\
			\vspace{0.02cm}
					\end{minipage}	}
\subfigure[]{       \label{fig6b}
		\begin{minipage}[t]{0.48\linewidth}
			\centering
			\includegraphics[width=2.5in,height=1.8in]{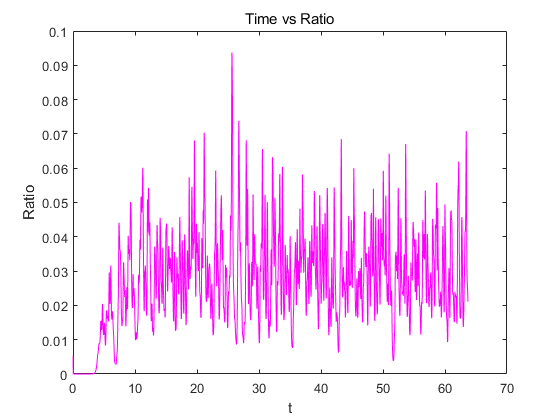}\\
			\vspace{0.02cm}
					\end{minipage}	} \centering
\vspace{-0.2cm}
\caption{The time stepsize $k$ and ratio $\chi$ changing with constant
time stepsize. }
\label{fig6}
\end{figure}

\begin{figure}[ptbh]
\centering
\par
\subfigure[]{    \label{fig7a}
		\begin{minipage}[t]{0.48\linewidth}
			\centering
			\includegraphics[width=2.5in,height=1.8in]{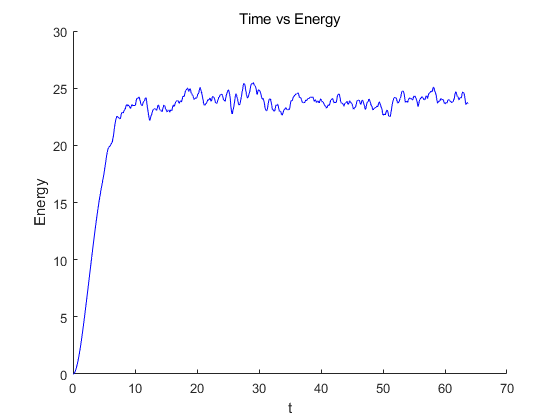}\\
			\vspace{0.02cm}
					\end{minipage}	}
\subfigure[]{    \label{fig7b}
		\begin{minipage}[t]{0.48\linewidth}
			\centering
			\includegraphics[width=2.5in,height=1.8in]{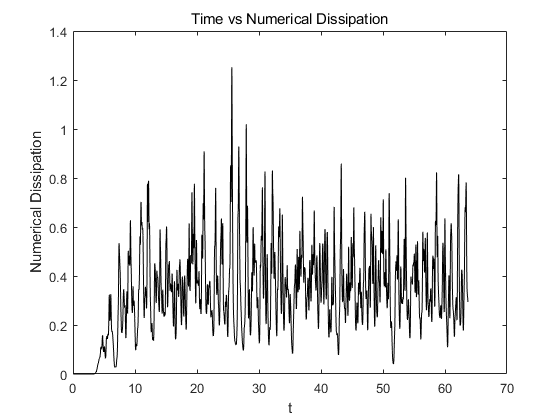}\\
			\vspace{0.02cm}
					\end{minipage}	} \centering
\vspace{-0.2cm}
\caption{The energy $\frac{1}{2} \left\| u \right\|^{2}$ and numerical
dissipation $\sqrt{\epsilon^{DLN}}$ changing with constant time stepsize. }
\label{fig7}
\end{figure}

We now compare the constant time stepsize results in Figure \ref{fig6} and
Figure \ref{fig7} with the adaptive results in Figure \ref{fig4} and Figure
\ref{fig5}. We first note that time stepsize under adaptivity reaches maximum
value $1.6$ in a few steps then goes down sharply to the minimum
stepsize $0.01$ thereafter. In the test represented in Figure \ref{fig4a},
the timestep alternates between the minimum stepsize and twice that. This
is due to the preset algorithmic choice.
DLN under constant stepsize takes 773 timesteps to reach a kinetic energy of
approximately $23$ which adaptive DLN algorithm reaches that level in $396$
timesteps. In comparison of numerical dissipation, Figure \ref{fig5b} and
\ref{fig7b} show that the numerical dissipation with adaptive time stepsize
evolves smoothly with a peak value below $0.35$. Similarly the ratio $\chi$ has
a \textit{order of magnitude smaller for adaptive time stepsize},
Figure \ref{fig4b}, than constant time stepsize, Figure \ref{fig7b}.

\nopagebreak[0]

\section{Conclusions}

Based on the theory and the simple numerical tests that for
time discretization of flow problems the 2-step DLN method is to be
preferred over the common BDF2 method. It is second order, unconditionally,
long time, nonlinearly stable. For increasing step-sizes, BDF2 injects
nonphysical kinetic energy in the discrete solution (disrupting long time
behavior and statistical equilibrium) while DLN does not. Important open
questions include how to perform error estimation in a memory and
computationally efficient (and effective) way. In particular, finding a memory
efficient estimator, as was done in Gresho, Sani and Engelman
\cite{gresho1998incompressiblevol2} for
the trapezoid rule, is a necessary step. It would be useful if the DLN
method could be embedded in a family of different orders with good
properties or if it could be induced from simpler methods by added time
filters. Both are open problems.

\bibliographystyle{amsplain}

\appendix

\section{Proof of Lemma \ref{lemma:disnormEst}}

\label{section:Est-disnorm}

\begin{proof}
For first part, if $M$ is even integer
\begin{align*}
\sum_{n=1}^{M-1} \left( k_{n} + k_{n-1} \right) \left\Vert v_{n+1}
\right\Vert _{k}^{p} \leq\sum_{\ell=1}^{M/2} \left( k_{2\ell-2} + k_{2\ell-1} \right)
\left\Vert v_{2\ell} \right\Vert _{k}^{p} \\
+ \left( k_{0} \left\Vert v_{1} \right\Vert _{k}^{p} + \sum_{\ell=1}^{M/2-1}
\left( k_{2\ell-1} + k_{2\ell} \right) \left\Vert v_{2\ell+1} \right\Vert _{k}^{p} +
k_{M-1} \left\Vert v_{M} \right\Vert _{k}^{p} \right) \\
= \left( \Vert\left| v \right| \Vert_{p,k}^{P_{1},R}\right) ^{p} + \left(
\Vert\left| v \right| \Vert_{p,k}^{P_{2},R}\right) ^{p} \ .
\end{align*}
And
\begin{align*}
\sum_{n=1}^{M-1} \left( k_{n} + k_{n-1} \right) \left\Vert v_{n-1}
\right\Vert _{k}^{p} \leq\sum_{\ell=0}^{M/2-1} \left( k_{2\ell} + k_{2\ell+1} \right)
\left\Vert v_{2\ell} \right\Vert _{k}^{p} \\
+ \left( k_{0} \left\Vert v_{0} \right\Vert _{k}^{p} + \sum_{\ell=0}^{M/2-2}
\left( k_{2\ell+1} + k_{2\ell+2} \right) \left\Vert v_{2\ell+1} \right\Vert _{k}^{p}
+ k_{M-1} \left\Vert v_{M-1} \right\Vert _{k}^{p} \right) \\
= \left( \left\Vert \left| v \right| \right\Vert _{p,k}^{P_{1},L}\right)
^{p} + \left( \left\Vert \left| v \right| \right\Vert
_{p,k}^{P_{2},L}\right) ^{p} \ .
\end{align*}
If $M$ is odd integer
\begin{align*}
\sum_{n=1}^{M-1} \left( k_{n} + k_{n-1} \right) \left\Vert v_{n+1}
\right\Vert _{k}^{p} \leq\left( \sum_{\ell=1}^{\left( M-1 \right) /2} \left(
k_{2\ell-2} + k_{2\ell-1} \right) \left\Vert v_{2\ell} \right\Vert _{k}^{p} + k_{M-1}
\left\Vert v_{M} \right\Vert _{k}^{p} \right) \\
+ \left( k_{0} \left\Vert v_{1} \right\Vert _{k}^{p} + \sum_{\ell=1}^{\left(
M-1 \right) /2} \left( k_{2\ell-1} + k_{2\ell} \right) \left\Vert v_{2\ell+1}
\right\Vert _{k}^{p} \right) = \left( \Vert\left| v \right| \Vert
_{p,k}^{P_{1},R}\right) ^{p} + \left( \Vert\left| v \right| \Vert
_{p,k}^{P_{2},R}\right) ^{p} \ .
\end{align*}
And
\begin{align*}
\sum_{n=1}^{M-1} \left( k_{n} + k_{n-1} \right) \left\Vert v_{n-1}
\right\Vert _{k}^{p} \leq\left( \sum_{\ell=1}^{\left( M-1 \right) /2} \left(
k_{2\ell-2} + k_{2\ell-1} \right) \left\Vert v_{2\ell-2} \right\Vert _{k}^{p} + k_{M-1}
\left\Vert v_{M-1} \right\Vert _{k}^{p} \right) \\
+ \left( k_{0} \left\Vert v_{0} \right\Vert _{k}^{p} + \sum_{\ell=1}^{\left(
M-1 \right) /2} \left( k_{2\ell-1} + k_{2\ell} \right) \left\Vert v_{2\ell-1}
\right\Vert _{k}^{p} \right) = \left( \Vert\left| v \right| \Vert
_{p,k}^{P_{1},L}\right) ^{p} + \left( \Vert\left| v \right| \Vert
_{p,k}^{P_{2},L}\right) ^{p} \ .
\end{align*}
\noindent It's easy to check
\begin{align*}
\sum_{n=1}^{M-1} \left( k_{n} + k_{n-1} \right) \left\Vert v_{n} \right\Vert
_{k}^{p} = \left( \Vert\left| v \right| \Vert_{p,k}^{P_{0},L}\right) ^{p}
+ \left( \Vert\left| v \right| \Vert_{p,k}^{P_{0},R} \right) ^{p} \ .
\end{align*}
\noindent Thus we have proved the first part. For second part, we can check
\begin{align*}
\sum_{n=1}^{M-1} \left( k_{n} + k_{n-1} \right) \left\Vert v \left( t_{n,*}
\right) \right\Vert _{k}^{p} = \left( \left\Vert \left| v_{*} \right|
\right\Vert _{p,k}^{\widetilde{P}_{1}} \right) ^{p} + \left( \left\Vert
\left| v_{*} \right| \right\Vert _{p,k}^{\widetilde{P}_{2}} \right) ^{p} \ .
\end{align*}
whenever $M$ is even integer or odd integer.
\end{proof}

\end{document}